\newtheorem{theorem}{Theorem}[section]
\newtheorem{proposition}[theorem]{Proposition}
\newtheorem{corollary}[theorem]{Corollary}
\newtheorem{_definition}[theorem]{Definition}
\newenvironment{definition}{\begin{_definition}\rm}{\end{_definition}}
\newtheorem{_remark}[theorem]{\it Remark}
\newenvironment{remark}{\begin{_remark}\rm}{\end{_remark}}
\newtheorem{_example}[theorem]{Example}
\newenvironment{example}{\begin{_example}\rm}{\end{_example}}
\numberwithin{equation}{section}
\numberwithin{table}{section}
\numberwithin{figure}{section}
\newcommand{\F}{\mathord{\mathbb F}}
\newcommand{\Q}{\mathord{\mathbb  Q}}
\newcommand{\R}{\mathord{\mathbb R}}
\newcommand{\Z}{\mathord{\mathbb Z}}
\newcommand{\FFF}{\mathord{\mathcal F}}
\newcommand{\GGG}{\mathord{\mathcal G}}
\newcommand{\LLL}{\mathord{\mathcal L}}
\newcommand{\PPP}{\mathord{\mathcal P}}
\newcommand{\QQQ}{\mathord{\mathcal Q}}
\newcommand{\RRR}{\mathord{\mathcal R}}
\newcommand{\VVV}{\mathord{\mathcal V}}
\newcommand{\inj}{\hookrightarrow}
\newcommand{\isom}{\xrightarrow{\sim}}
\newcommand{\set}[2]{\{ {#1}\mid {#2}  \}}
\newcommand{\shortset}[2]{\{ {#1} \mid  {#2}   \}}
\newcommand{\gen}[1]{\langle {#1}  \rangle}
\newcommand{\tensor}{\otimes}
\newcommand{\sprime}{\sp\prime}
\newcommand{\spar}[1]{\sp{(#1)}}
\newcommand{\sperp}{\sp{\perp}}
\newcommand{\dual}{\sp{\vee}}
\newcommand{\semidirectproduct}{\rtimes}
\newcommand{\inv}{\sp{-1}}
\newcommand{\bdr}{\partial\,}
\newcommand{\Hom}{\mathord{\mathrm{Hom}}}
\newcommand{\OG}{\mathord{\mathrm{O}}}
\newcommand{\Aut}{\operatorname{\mathrm{Aut}}\nolimits}
\newcommand{\rank}{\operatorname{\mathrm{rank}}\nolimits}
\newcommand{\closure}[1]{\overline{#1}}
\newcommand{\mystruth}[1]{\phantom{\hbox{\vrule height #1}}}
\newcommand{\weyl}{\mathord{\mathbf w}}
\newcommand{\ADE}{\mathord{\rm ADE}}
\newcommand{\intf}[1]{\langle #1 \rangle}
\newcommand{\intfS}[1]{\langle #1\rangle_{\SS}}
\newcommand{\intfR}[1]{\langle #1\rangle_{R}}
\newcommand{\intfQ}[1]{\langle #1\rangle_{Q}}
\newcommand{\intfL}[1]{\langle #1\rangle_{10}}
\newcommand{\intfLL}[1]{\langle #1\rangle_{26}}
\newcommand{\OGP}[1]{\OG(#1, \PPP)}
\newcommand{\oLtenFtwo}{{2}^{21} \cdot {3}^{5} \cdot {5}^{2} \cdot {7} \cdot {17} \cdot {31} }
\newcommand{\Km}{\mathord{\rm Km}}
\newcommand{\Jac}{\mathord{\rm Jac}}
\renewcommand{\SS}{\mathbf{S}}
\renewcommand{\qed}{\hfill {$\Box$}}
\begin{document}
\title[Borcherds' method for Enriques surfaces]
{Borcherds' method for Enriques surfaces}
\author{Simon Brandhorst}
\address{(S. B.) Saarland University, Fachbereich Mathematik, Postfach 151150, Saarbr\"ucken, Germany}
\email{brandhorst@math.uni-sb.de}
\author{Ichiro Shimada}
\address{(I. S.) Department of Mathematics,
Graduate School of Science,
Hiroshima University,
1-3-1 Kagamiyama,
Higashi-Hiroshima,
739-8526 JAPAN}
\email{ichiro-shimada@hiroshima-u.ac.jp}

\thanks{The first author was supported by SFB-TRR 195.
The second author was supported by JSPS KAKENHI Grant Number 15H05738, ~16H03926,  and~16K13749}

\begin{abstract}
We classify all primitive embeddings
of  the lattice of numerical equivalence classes of divisors
of an Enriques surface with the intersection form multiplied by $2$
into an even unimodular hyperbolic lattice of rank $26$.
These embeddings have a property
that facilitates  the computation
of the automorphism group of an Enriques surface
by Borcherds' method.
\end{abstract}

\keywords{Enriques surface, automorphism group, lattice}
\maketitle

\section{Introduction}\label{sec:intro}
Lattice theory is a
very strong tool in the study of $K3$ and Enriques surfaces.
Let $Z$ be a $K3$  or an Enriques surface defined over an algebraically closed field.
We denote by  $S_Z$  the lattice of numerical equivalence classes of
divisors on $Z$.
Note that $S_Z$ is even, and if $\rank S_Z>1$, then $S_Z$ is hyperbolic,
that is, the signature of $S_Z$ is $(1, \rank S_Z-1)$.
For a positive integer $n$ with $n\equiv 2\bmod 8$,
let $L_n$ denote an even unimodular hyperbolic lattice of rank $n$,
which is unique up to isomorphism.
\emph{Borcherds' method}~\cite{Bor1, Bor2} is a procedure to calculate
the automorphism group $\Aut(X)$ of a $K3$ surface $X$
by embedding $S_X$  primitively into 
$L_{26}$,
and applying Conway's result~\cite{Conway1983} on $\OG(L_{26})$.
After the work of Kondo~\cite{Kondo1998},
this method has been applied to many $K3$ surfaces,
and automatized for computer calculation (see~\cite{Shimada2015}  and the references therein).
\par
Let $Y$ be an Enriques surface in characteristic $\ne 2$
with the universal covering $\pi\colon X\to Y$.
Then we have a primitive embedding $\pi^*\colon S_Y(2)\inj S_X$,
where, for a lattice $L$, we denote by $L(2)$ the lattice
with the same underlying $\Z$-module as $L$
and with the symmetric bilinear form being two times that of $L$.
Note that $S_Y$ is isomorphic to $L_{10}$.
Hence, to extend Borcherds' method to
Enriques surfaces,
it is important to study the primitive embeddings of $L_{10}(2)$ into $L_{26}$.
\par
We identify $\OG(L_{10}(2))$ with $\OG(L_{10})$.
We say that two embeddings $\iota$ and $\iota\sprime$ of $L_{10}(2)$ into $L_{26}$ are
\emph{equivalent  up to the action of $\OG(L_{10})$ and $\OG(L_{26})$}
if there exist isometries $g\in \OG(L_{10})$ and $g\sprime \in \OG(L_{26})$
such that, for all $v\in L_{10}(2)$, one has  $\iota(v)^{g\sprime}=\iota\sprime(v^g)$.
Our first main result is as follows.
\begin{theorem}\label{thm:seventeen}
Up to the action of $\OG(L_{10})$ and $\OG(L_{26})$,
there exist exactly $17$ equivalence classes of
primitive embeddings of $L_{10}(2)$ into $L_{26}$,
and they are given in Table~\ref{table:seventeen}.
\end{theorem}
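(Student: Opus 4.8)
The plan is to pass to the orthogonal complement and apply Nikulin's theory of discriminant forms. Write $S:=L_{10}(2)$. Since $L_{10}$ is even unimodular, its scaled dual satisfies $S\dual=\frac12 L_{10}$, so the discriminant group is $A(S)=L_{10}/2L_{10}\cong(\Z/2\Z)^{10}$ and the discriminant form $q_S$ is the $\mathbb{F}_2$-valued quadratic form $\bar v\mapsto \frac12\intf{v,v}\bmod 2\Z$. As this form takes values in $\Z/2\Z\subset\Q/2\Z$, one checks at once that $-q_S\cong q_S$. Now a primitive embedding $S\inj L_{26}$ is the same datum as its orthogonal complement $T$ together with the even unimodular overlattice $L_{26}\supseteq S\oplus T$; the latter corresponds to an isotropic subgroup of $A(S)\oplus A(T)$ equal to its own orthogonal complement, which, as $L_{26}$ is unimodular, must be the graph of an anti-isometry $\gamma\colon q_S\to -q_T$. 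Hence $T$ is forced to be an even, negative-definite lattice of rank $16$ with $q_T\cong -q_S\cong q_S$; equivalently, $T$ ranges over a single genus of $2$-elementary lattices with invariants $(\rank,\ell,\delta)=(16,10,0)$, whose nonemptiness is guaranteed by Nikulin's existence criterion.

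Next I would make the equivalence relation explicit. Two pairs $(T,\gamma)$ and $(T',\gamma')$ give embeddings equivalent under $\OG(L_{10})\times\OG(L_{26})$ precisely when $T\cong T'$ and $\gamma'$ differs from $\gamma$ by the actions induced on the discriminant forms by $\OG(S)=\OG(L_{10})$ and by $\OG(T)$. Thus the embeddings with complement isomorphic to a fixed $T$ are counted by the double cosets of anti-isometries modulo the images of these two reduction maps. The decisive point is that $\OG(L_{10})\to\OG(q_S)$ is surjective: I would prove this directly, by noting that a $(-2)$-vector of $L_{10}$ reduces modulo $2$ to an anisotropic vector of $q_S$, its reflection reduces to the corresponding orthogonal reflection, and such reflections generate the finite orthogonal group $\OG(q_S)\cong\OG_{10}^+(\mathbb{F}_2)$. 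Note that Nikulin's rank criterion for surjectivity does not apply here, since $\rank S=10<\ell(A(S))+2$, so this surjectivity genuinely has to be argued. Granting it, the set of admissible $\gamma$ forms a single orbit, so each isomorphism class of $T$ contributes exactly one equivalence class of embeddings, and the count of embeddings equals the number of isomorphism classes in the genus of $T$.

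It then remains to enumerate that genus, and here lies the real work. I would fix one explicit representative, for instance a lattice assembled from $U(2)$, $E_8(-2)$ and negative-definite root lattices realizing the prescribed discriminant form, compute the total mass $\sum_T 1/|\OG(T)|$ by the Smith--Minkowski--Siegel mass formula, and then generate the genus by Kneser's neighbour method, computing $|\OG(T)|$ for each new isometry class until the accumulated masses exhaust the total mass; agreement of masses certifies completeness. The expected outcome is exactly $17$ classes, recorded in Table~\ref{table:seventeen}. The \emph{main obstacle} I anticipate is precisely this enumeration: both the $2$-adic local density entering the mass formula for so large a $2$-elementary discriminant group, and above all the verification that the list of representatives is complete, are heavy and essentially computer-assisted for lattices of rank $16$. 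A secondary point that must not be glossed over is the surjectivity of $\OG(L_{10})\to\OG(q_S)$; were it to fail on some piece of $q_S$, the clean ``one class per $T$'' tally would have to be replaced by an honest double-coset computation for each $T$, complicating the bookkeeping behind the number $17$.
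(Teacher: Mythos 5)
Your proposal follows essentially the same route as the paper: reduce via Nikulin's overlattice correspondence to classifying the genus of even negative-definite rank-$16$ lattices with discriminant form $-q(L_{10}(2))\cong u^{\oplus 5}$, use the surjectivity of $\OG(L_{10})\to\OG(q(L_{10}(2)))$ (which the paper verifies computationally from the ten generating reflections, and which you rightly flag as the point that must be argued since Nikulin's rank criterion fails) to get exactly one equivalence class per isomorphism class of complement, and then enumerate the genus by the mass formula together with Kneser's neighbour method. The only slip is your suggested starting representative built from $U(2)$, which is indefinite and hence cannot sit inside a negative-definite lattice; the paper instead starts the neighbour walk from $D_8\oplus E_8(2)$.
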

\begin{table}
\[
\renewcommand{\arraystretch}{1.2}
\begin{array}{cclll}
\texttt{No.}&\texttt{name}&\texttt{rt}&\texttt{m4}&\texttt{og}\\ 
\hline 
\mystruth{12pt}
1&\texttt{12A}&D_{8}&1376& {2}^{29} \cdot {3}^{7} \cdot {5}^{3} \cdot {7}^{2} \\ 
2&\texttt{12B}&A_{7}&1824& {2}^{23} \cdot {3}^{6} \cdot {5}^{2} \cdot {7}^{2} \\ 
3&\texttt{20A}&D_{4}+D_{5}&1760& {2}^{25} \cdot {3}^{7} \cdot {5}^{2} \cdot {7} \\ 
4&\texttt{20B}&2D_{4}&1888& {2}^{29} \cdot {3}^{4} \cdot {5} \cdot {7} \\ 
5&\texttt{20C}&10A_{1}+D_{6}&1632& {2}^{28} \cdot {3}^{6} \cdot {5}^{3} \cdot {7} \\ 
6&\texttt{20D}&A_{3}+A_{4}&2016& {2}^{16} \cdot {3}^{6} \cdot {5}^{3} \cdot {7} \\ 
7&\texttt{20E}&5A_{1}+A_{5}&1952& {2}^{20} \cdot {3}^{7} \cdot {5}^{3} \\ 
8&\texttt{20F}&2A_{3}&2080& {2}^{23} \cdot {3}^{4} \cdot {5}^{2} \\ 
9&\texttt{40A}&4A_{1}+2A_{3}&2016& {2}^{25} \cdot {3}^{5} \cdot {5} \\ 
10&\texttt{40B}&8A_{1}+2D_{4}&1760& {2}^{30} \cdot {3}^{6} \cdot {5} \cdot {7} \\ 
11&\texttt{40C}&6A_{1}+A_{3}&2080& {2}^{20} \cdot {3}^{5} \cdot {5} \cdot {7} \\ 
12&\texttt{40D}&12A_{1}+D_{4}&1888& {2}^{28} \cdot {3}^{5} \cdot {5}^{2} \\ 
13&\texttt{40E}&2A_{1}+2A_{2}&2144& {2}^{16} \cdot {3}^{6} \cdot {5}^{2} \\ 
14&\texttt{96A}&8A_{1}&2144& {2}^{28} \cdot {3}^{3} \\ 
15&\texttt{96B}&16A_{1}&2016& {2}^{31} \cdot {3}^{5}\\ 
16&\texttt{96C}&4A_{1}&2208&  {2}^{22} \cdot {3}^{5} \\ 
17&\texttt{infty}& &2272& {2}^{26} \cdot {3}^{2} \cdot {5} \cdot {7} \\ 
\end{array}
\]
\vskip .1cm
\caption{Primitive embeddings}\label{table:seventeen}
\end{table}
\par\noindent
{\bf Explanation of  Table~\ref{table:seventeen}.}
For a lattice $L$, we denote by 
$A(L):=L\dual/L$ 
the \emph{discriminant group} of $L$,
where $L\dual:=\Hom(L, \Z)$ is the dual lattice of $L$,
and by  $\RRR(L)$  the set of $(-2)$-vectors of a lattice $L$.
Let $R_{\iota}$ denote the orthogonal complement of the image of
a primitive embedding $\iota\colon L_{10}(2)\inj L_{26}$ in $L_{26}$.
Note that $R_{\iota}$ is a negative-definite even lattice of rank $16$
with  $A(R_{\iota})$ being isomorphic to $A(L_{10}(2))\cong (\Z/2\Z)^{10}$. 
The item $\texttt{rt}$ is the $\ADE$-type of the negative-definite root lattice
generated by  $\RRR(R_{\iota})$.
For the embedding 
$\texttt{infty}$, the lattice $R_{\iota}$ contains no $(-2)$-vectors.
The item $\texttt{m4}$ is the number of $(-4)$-vectors   in $R_{\iota}$.
The item $\texttt{og}$ is the order of the group  $\OG(R_{\iota})$.
\par
\medskip
These $17$ embeddings have a remarkable property,
which is very useful for the calculation of the automorphism group of
an Enriques surface.
In order to state this property,
we need to explain the notion of \emph{tessellation by chambers}.
Let $L$ be an even hyperbolic lattice.
A \emph{positive cone} $\PPP(L)$  is one of the two connected components
of the subspace of $L\tensor \R$ consisting of vectors $x\in L\tensor \R$
such that $\intf{x, x}>0$.
We fix a positive cone $\PPP(L)$ of $L$, and
denote by $\OGP{L}$
the stabilizer subgroup of $\PPP(L)$ in $\OG(L)$,
which is of index $2$ in $\OG(L)$.
A \emph{rational hyperplane} $(v)\sperp$ is a subspace of $\PPP(L)$ defined by
$\intf{x, v}=0$, where $v\in L\tensor \Q$ is a vector satisfying  $\intf{v, v}<0$.
Let $\FFF$ be a locally finite family of rational hyperplanes of $\PPP(L)$.
A closed subset $D$ of $\PPP(L)$ is said to be an \emph{$\FFF$-chamber} if
$D$ is the closure in $\PPP(L)$ of a connected component of
the complement
\[
\PPP(L)\setminus \bigcup_{H\in \FFF}\; H.
\]
We say that  a subset $N$ of $\PPP(L)$ has a \emph{tessellation by $\FFF$-chambers}
if $N$ is a union of $\FFF$-chambers.
For example, if $\FFF\sprime$ is a subfamily of $\FFF$,
then every $\FFF\sprime$-chamber has a tessellation by $\FFF$-chambers.
\begin{definition}
Note that  $\PPP(L)$ has a tessellation by $\FFF$-chambers.
We say that this tessellation of $\PPP(L)$
is \emph{simple} if there exists a subgroup of $\OGP{L}$
that preserves the family $\FFF$
of hyperplanes (and hence the set of $\FFF$-chambers) and acts on
the set of $\FFF$-chambers transitively.
\end{definition}
\begin{definition}
We say that $\FFF$-chambers $D$ and $D\sprime$ are \emph{isomorphic}
if there exists an isometry
$g\in \OGP{L}$ such that $D^g=D\sprime$.
The automorphism group of an $\FFF$-chamber is defined to be
\[
\OG(L, D):=\set{g\in \OGP{L}}{D^g=D}.
\]
\end{definition}
\begin{definition}
Let $D$ be an $\FFF$-chamber,
and $\closure{D}$ the closure of $D$ in $L\tensor\R$.
We say that $D$ is \emph{quasi-finite} if
$\closure{D}\setminus D$ is contained in a union of at most countably many
half-lines $\R_{\ge 0} v_i \subset \bdr\overline{\PPP}(L)$, where
$v_i$ are non-zero vectors of $L\tensor\R$ satisfying $\intf{v_i, v_i}=0$,
$\overline{\PPP}(L)$ is the closure of $\PPP(L)$ in $L\tensor\R$,
and $\bdr\overline{\PPP}(L):=\overline{\PPP}(L)\setminus \PPP(L)$.
\end{definition}
Each $(-2)$-vector  $r\in \RRR(L)$ defines the \emph{reflection}
$s_r\in \OGP{L}$ into the mirror $(r)\sperp$,
which is defined by $x^{s_r}:= x+\intf{x, r}r$.
Let $W(L)$ denote the subgroup  of $\OGP{L}$
generated by reflections $s_r$, where $r$ runs through  $\RRR(L)$.
\begin{example}\label{example:DRRR}
We put $\RRR(L)\sperp:=\shortset{(r)\sperp}{r\in \RRR(L)}$,
which is a locally finite family of rational hyperplanes.
Then an $\RRR(L)\sperp$-chamber $D_{\RRR}$ is a standard fundamental domain
of the action on $\PPP(L)$ of  $W(L)$.
Hence the tessellation of $\PPP(L)$ by $\RRR(L)\sperp$-chambers
is simple.
Note that we have $\OGP{L}=  W(L)\semidirectproduct \OG(L, D_{\RRR})$.
\end{example}
\begin{definition}
The shape of an $\RRR(L_n)\sperp$-chamber was determined by Vinberg~\cite{Vinberg1975}
for $n=10$ and  $18$, and by Conway~\cite{Conway1983} for $n=26$.
Hence we call  an $\RRR(L_{10})\sperp$-chamber a \emph{Vinberg chamber},
and an $\RRR(L_{26})\sperp$-chamber a \emph{Conway chamber}.
\end{definition}
It is known that Vinberg chambers and Conway chambers are quasi-finite.
\begin{definition}
Let $D$ be an \emph{$\FFF$-chamber}.
A \emph{wall} of $D$ is a closed subset $w$ of $D$
disjoint from the interior of $D$ satisfying the following;  there exists a hyperplane $(v)\sperp \in \FFF$
such that  $w$ is equal to $D\cap (v)\sperp$ and that $w$ contains a non-empty open subset of $(v)\sperp$.
We say that  $v\in L\tensor \Q$ \emph{defines a wall $w$} of $D$ if $w$ is equal to $D\cap (v)\sperp$
and $\intf{x, v}\ge 0$ holds for all $x\in D$.
\end{definition}
\begin{example}
Let $D_{\RRR}$ be as in Example~\ref{example:DRRR}.
Then  the group $W(L)$ is generated by reflections with respect to the $(-2)$-vectors
defining walls of $D_{\RRR}$.
\end{example}
\begin{definition}
Let $D$  be an $\FFF$-chamber, and $w$ a wall of $D$.
Then there exists a unique $\FFF$-chamber $D\sprime$ such that $D\cap D\sprime=w$.
We call $D\sprime$ the $\FFF$-chamber \emph{adjacent to $D$ across the wall $w$}.
\end{definition}
Let $\iota\colon S\inj L$ be an embedding
of an even hyperbolic lattice $S$,
$\PPP(S)$  the positive cone of $S$
that is mapped to $\PPP(L)$ by $\iota\tensor \R$,
and
$\iota_{\PPP}\colon \PPP(S)\inj \PPP(L)$
the induced inclusion.
We put
\[
\iota^{*} \FFF:=\set{\iota_{\PPP}\inv (H)}{H\in \FFF, \;
\emptyset\ne \iota_{\PPP}\inv (H) \subsetneq \PPP(S)\;
}.
\]
Then $\iota^{*} \FFF$ is a locally finite family of rational hyperplanes of $\PPP(S)$,
and $\PPP(S)$ has a tessellation by $\iota^{*} \FFF$-chambers.
If all $\FFF$-chambers are quasi-finite,
then so are all $\iota^{*}\FFF$-chambers.
\par
In the following,
we identify $\PPP(L_{10}(2))$ with $\PPP(L_{10})$.
If $\iota\colon L_{10}(2)\inj L_{26}$ is a primitive embedding,
then $\PPP(L_{10})$ has a tessellation by $\iota^{*} \RRR(L_{26})\sperp$-chambers.
We call an $\iota^{*} \RRR(L_{26})\sperp$-chamber
an \emph{induced chamber} associated with the embedding $\iota$.
Note that every induced chamber is quasi-finite.
\par
In the application of Borcherds' method for the calculation of $\Aut(X)$
of a $K3$ surface $X$,
we embed $S_X$ into $L_{26}$ primitively and investigate
the tessellation of $\PPP_X$ by induced chambers.
This tessellation
is usually not simple,
and in these cases, the computation of $\Aut(X)$ becomes very hard.
See, for example,  the case of the singular $K3$ surface with transcendental lattice
of discriminant $11$ treated in~\cite{Shimada2015},
or the case of the supersingular $K3$ surface of Artin invariant $1$ in characteristic $5$
studied in~\cite{KatsuraKondoShimada2014}.
\par
Our second main result is as follows.
\begin{theorem}\label{thm:simple}
Let $\iota\colon L_{10}(2)\inj L_{26}$ be a  primitive embedding
that is not of type ${\tt infty}$.
Then the  number of walls of an induced chamber $D$ is finite,
and  each wall of  $D$ is defined by a $(-2)$-vector of $L_{10}$.
If $r\in \RRR(L_{10})$ defines a wall $w=D\cap (r)\sperp$ of $D$,
then the reflection $s_r$ with respect to $r$  preserves
the family of hyperplanes $\iota\sp*\RRR(L_{26})\sperp$
and hence the set of induced chambers.
In particular, the induced chamber adjacent to $D$
across the wall $w=D\cap (r)\sperp$
is equal to  $D^{s_r}$.
\end{theorem}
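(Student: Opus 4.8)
The plan is to analyze the walls of an induced chamber $D$ by relating them, via the primitive embedding $\iota$, to the walls of the Conway chamber containing $D$ inside $\PPP(L_{26})$. Since $D$ is an $\iota^*\RRR(L_{26})\sperp$-chamber, it has a tessellation inside some Conway chamber $C_0$, but more usefully every wall of $D$ arises from the pullback $\iota_{\PPP}\inv((v)\sperp)$ of a hyperplane $(v)\sperp$ with $v\in\RRR(L_{26})$. The key reduction is that a wall of $D$ is defined by a vector $v_S\in L_{10}(2)\tensor\Q$ that is (a rational multiple of) the image of the projection of such a $v$ into $L_{10}(2)\tensor\Q$ along the orthogonal complement $R_{\iota}$. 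So the first step is to set up this projection explicitly and compute the self-intersection of the projected vector.

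The heart of the argument is the following computation. Write $L_{26}\tensor\Q = (L_{10}(2)\tensor\Q)\oplus(R_{\iota}\tensor\Q)$, and for $v\in\RRR(L_{26})$ let $v=a+b$ be the corresponding decomposition with $a\in L_{10}(2)\tensor\Q$ and $b\in R_{\iota}\tensor\Q$. The hyperplane $(v)\sperp$ pulls back to the hyperplane in $\PPP(L_{10})$ defined by $a$, so I must show that $a$ can be rescaled to a $(-2)$-vector of $L_{10}$. Since $\intf{v,v}=-2$ and $R_{\iota}$ is negative-definite, we get $\intf{a,a} = -2-\intf{b,b} \ge -2$, with $\intf{a,a}<0$ for the hyperplane to be rational. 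The crucial numerical input is that $A(R_{\iota})\cong(\Z/2\Z)^{10}$ matches $A(L_{10}(2))$, so the discriminant forms are tightly controlled; combined with the classification in Table~\ref{table:seventeen} (where for types other than $\mathtt{infty}$ the lattice $R_{\iota}$ does contain $(-2)$-vectors), one shows that the possible values of $\intf{a,a}$ and the glue-group structure force $a$ (suitably scaled by the glue) to land in $L_{10}$ with square $-2$ in the $L_{10}$-form — equivalently, $a$ as a vector of $L_{10}(2)$ has square $-4$, which corresponds to a $(-2)$-vector in $L_{10}$. Finiteness of the number of walls then follows because each wall-defining $(-2)$-vector $r$ of $L_{10}$ comes from finitely many orbits of $v\in\RRR(L_{26})$ under the glue, and quasi-finiteness of the induced chamber already guarantees local finiteness; bounding the count uses that $R_{\iota}$ has finitely many $(-2)$- and $(-4)$-vectors (recorded as $\mathtt{rt}$ and $\mathtt{m4}$ in the table).

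For the reflection statement, once I know each wall $w=D\cap(r)\sperp$ is defined by $r\in\RRR(L_{10})$, I must check that $s_r$ preserves the family $\iota^*\RRR(L_{26})\sperp$. The clean way is to show $s_r$ lifts to an isometry of $L_{26}$ preserving $\iota(L_{10}(2))$ and $R_{\iota}$: since $r\in L_{10}$ corresponds to a vector of square $-4$ in $L_{10}(2)$, the reflection $s_r$ of $L_{10}$ acts on $L_{10}(2)$, and I extend it by the identity on $R_{\iota}$. Using that $A(L_{10}(2))$ and $A(R_{\iota})$ are glued by an anti-isometry and that $s_r$ acts trivially on the discriminant form $A(L_{10}(2))$ (because $r$ has the right divisibility — this is where $\intf{r,r}=-2$ in $L_{10}$, i.e. square $-4$ and divisibility $2$ in $L_{10}(2)$, makes $s_r$ act as $\pm\mathrm{id}$ on the relevant discriminant component), the pair $(s_r, \mathrm{id})$ glues to an isometry $\tilde s_r\in\OGP{L_{26}}$. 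This $\tilde s_r$ permutes $\RRR(L_{26})$ and restricts to $s_r$ on $\PPP(L_{10})$, hence preserves $\iota^*\RRR(L_{26})\sperp$; consequently it sends induced chambers to induced chambers, and since it fixes the wall $w$ pointwise and sends the interior of $D$ to the opposite side, the adjacent chamber across $w$ is exactly $D^{s_r}$.

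The main obstacle I anticipate is the glue-theoretic verification that $s_r$ acts trivially (or as a scalar) on the discriminant group $A(L_{10}(2))$ and thus lifts compatibly with the fixed gluing to $R_{\iota}$. This is not automatic for an arbitrary $(-2)$-vector: it genuinely uses the special structure $A(L_{10}(2))\cong(\Z/2\Z)^{10}$ together with the fact that $r$ has divisibility $2$ in $L_{10}(2)$, so that the induced map on $A(L_{10}(2))$ is the identity. Establishing this divisibility claim — equivalently, that the wall-defining vectors are precisely the images of primitive vectors of $L_{10}$ with square $-2$ and not some other rational vectors — is the step that carries the real weight, and it is where the explicit data of Table~\ref{table:seventeen} (the $\ADE$-type $\mathtt{rt}$ and the exclusion of type $\mathtt{infty}$) will be indispensable.
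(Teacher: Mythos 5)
Your first half --- projecting each $r\in\RRR(L_{26})$ whose hyperplane meets the image of $\iota_{\PPP}$ onto $\SS\dual\oplus R_\iota\dual$ and rescaling the $\SS\dual$-component --- is the paper's route, and the numerical mechanism you gesture at is the right one: since $\SS\dual=\frac12L_{10}$, the form $q(\SS)$, hence also $q(R_\iota)\cong-q(\SS)$, takes values in $\Z/2\Z$, so $\intfR{v,v}\in\Z$ for all $v\in R_\iota\dual$; combined with $-2<\intfR{r_R,r_R}\le 0$ and a parity argument excluding $r_R=0$, this forces $\intfR{r_R,r_R}=-1$, $\intfS{r_{\SS},r_{\SS}}=-1$, and $2r_{\SS}\in\RRR(L_{10})$. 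Two secondary points, though: finiteness of the set of walls does \emph{not} follow from local finiteness or quasi-finiteness; the paper obtains it from the computationally verified fact that $\intfS{\weyl_{\SS},\weyl_{\SS}}>0$ for every non-\texttt{infty} type, which makes each set $V_{\SS}(a)$ finite. The columns \texttt{rt} and \texttt{m4} of Table~\ref{table:seventeen} play no role here.

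The reflection step contains a genuine error. You propose to lift $s_r$ to $L_{26}$ as the glue of $(s_r,\mathrm{id}_{R_\iota})$, justified by the claim that $s_r$ acts trivially on $A(\SS)$ because $r$ has square $-4$ and divisibility $2$ in $\SS$. That claim is false: identifying $A(\SS)=\frac12L_{10}/L_{10}$, for $u=s/2\in\SS\dual$ one has $u^{s_r}-u=\frac12\intfL{s,r}\,r\equiv \frac12 r\not\equiv 0 \bmod \SS$ whenever $\intfL{s,r}$ is odd, and such $s$ exist because $r$ is primitive and $L_{10}$ is unimodular. In fact $\eta(\SS)(s_r)$ is the nontrivial mod-$2$ transvection in $r$, and these transvections generate the whole group $\OG(q(\SS))$ of order $\oLtenFtwo$ --- this is precisely the surjectivity of $\eta(\SS)|_{\OGP{L_{10}}}$ that the paper uses to prove Theorem~\ref{thm:seventeen}. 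Consequently, by Corollary~\ref{cor:lifts} the pair $(s_r,\mathrm{id})$ does \emph{not} extend to an isometry of $L_{26}$; a lift exists if and only if some $h\in\OG(R_\iota)$ satisfies $\eta(R_\iota)(h)=\OG(\phi)(\eta(\SS)(s_r))$, and whether the rather small group $\OG(R_\iota)$ (see column \texttt{og}) realizes this prescribed nontrivial discriminant action is exactly the nontrivial content of the theorem. The paper does not deduce this from general lattice theory: Proposition~\ref{prop:gtilde} is proved by exhibiting, for each wall in each of the $16$ cases, an explicit $\tilde g\in\OGP{L_{26}}$ found by a heuristic search that inverts the finite root system $\Sigma$ supported along the wall. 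Your proposal is missing the idea that replaces the false ``trivial discriminant action'' claim, and this is the step carrying the real weight of the result.
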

\begin{corollary}\label{cor:simple}
If $\iota\colon L_{10}(2)\inj L_{26}$ is not of type  ${\tt infty}$,
then the tessellation of $\PPP(L_{10})$ by induced chambers is simple.
\end{corollary}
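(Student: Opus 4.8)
The plan is to exhibit a subgroup of $\OGP{L_{10}}$ satisfying the two requirements in the definition of a simple tessellation: it must preserve the family $\FFF:=\iota\sp*\RRR(L_{26})\sperp$ of hyperplanes, and it must act transitively on the set of induced chambers. The natural candidate is the group $G$ generated by all reflections $s_r$ with $r\in\RRR(L_{10})$ a $(-2)$-vector that defines a wall of some induced chamber. Since $\iota$ is not of type ${\tt infty}$, Theorem~\ref{thm:simple} applies to every induced chamber; it guarantees that such wall-defining $(-2)$-vectors exist, and that for each of them the reflection $s_r$ preserves $\FFF$. Hence every generator of $G$ lies in $\OGP{L_{10}}$ and preserves $\FFF$, so $G$ preserves $\FFF$ and permutes the set of induced chambers. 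It remains only to establish transitivity.

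For transitivity I would argue via a gallery. Fix induced chambers $D$ and $D\sprime$ and choose interior points $x\in D$ and $x\sprime\in D\sprime$. The positive cone $\PPP(L_{10})$ is convex, so the segment joining $x$ to $x\sprime$ lies entirely in $\PPP(L_{10})$. Because $\FFF$ is locally finite, this compact segment meets only finitely many hyperplanes of $\FFF$; after a small perturbation of the path, keeping its endpoints fixed and remaining in $\PPP(L_{10})$, I may assume it avoids the codimension-$\ge 2$ skeleton $\bigcup_{H\ne H\sprime}(H\cap H\sprime)$ and crosses $\bigcup_{H\in\FFF}H$ transversally at finitely many points, each lying in the relative interior of a wall. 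The path then traverses a finite sequence of induced chambers $D=D_0, D_1, \dots, D_k=D\sprime$ in which $D_i$ and $D_{i+1}$ are adjacent across a wall of $D_i$.

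Now I invoke the final assertion of Theorem~\ref{thm:simple}: the wall separating $D_i$ from $D_{i+1}$ is defined by some $r_i\in\RRR(L_{10})$, and the induced chamber adjacent to $D_i$ across it equals $D_i^{s_{r_i}}$, so $D_{i+1}=D_i^{s_{r_i}}$ with $s_{r_i}\in G$. Composing these reflections gives $D\sprime=D^{g}$ for $g:=s_{r_0}\cdots s_{r_{k-1}}\in G$. Thus $G$ acts transitively on the induced chambers, and together with the previous paragraph this shows that the tessellation of $\PPP(L_{10})$ by induced chambers is simple.

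The main obstacle is the transitivity step, and within it the genericity argument that replaces the straight segment by a path crossing walls one at a time. One must ensure the path avoids the codimension-$\ge 2$ intersections of hyperplanes so that each crossing is an honest wall-crossing between two chambers; this is exactly where local finiteness of $\FFF$ and the quasi-finiteness of induced chambers enter. Once the path is in this general position, every other step is an immediate application of Theorem~\ref{thm:simple}, which does all the geometric work of identifying the adjacent chamber with a reflection image $D^{s_r}$ for $s_r\in\OGP{L_{10}}$.
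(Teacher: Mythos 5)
Your proposal is correct and follows essentially the same route as the paper: the paper likewise deduces the corollary from Theorem~\ref{thm:simple} by noting that the wall reflections $s_r$ preserve the family $\iota\sp*\RRR(L_{26})\sperp$, taking a chain of adjacent induced chambers from $D$ to $D\sprime$, and composing the corresponding reflections by induction on the chain length. The only difference is cosmetic: the paper simply asserts the existence of the connecting chain, whereas you justify it with the standard perturbed-segment gallery argument.
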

\begin{table}
\[
\renewcommand{\arraystretch}{1.2}
\begin{array}{ccllllll}
\texttt{No.}&\texttt{name}&\texttt{walls}&\texttt{volindex}&\texttt{gD}&\texttt{orb}&\texttt{isom}&\texttt{NK}\\ 
\hline 
\mystruth{12pt}
1&\texttt{12A}&12& {2}^{12} \cdot {3}^{5} \cdot {5}^{2} \cdot {7} & {2}^{2}  &2+2+2+2+4& &\textrm{I}\\ 
2&\texttt{12B}&12& {2}^{12} \cdot {3}^{3} \cdot {5} \cdot {7}& {2}^{3} \cdot {3}  &6+6& &\textrm{II}\\ 
3&\texttt{20A}&20& {2}^{8} \cdot {3}^{4} \cdot {5} \cdot {7}& {2}^{3} \cdot {3}  &4+4+6+6& &\textrm{V}\\ 
4&\texttt{20B}&20& {2}^{10} \cdot {3}^{2} \cdot {5} \cdot {7}& {2}^{6} &4+8+8& &\textrm{III}\\ 
5&\texttt{20C}&20& {2}^{6} \cdot {3}^{3} \cdot {5} \cdot {7} & {2}^{3} \cdot {3} \cdot {5}  &5+15&\texttt{20D}&\textrm{VII}\\ 
6&\texttt{20D}&20&{2}^{6} \cdot {3}^{3} \cdot {5} \cdot {7} & {2}^{3} \cdot {3} \cdot {5} &5+15&\texttt{20C}&\textrm{VII}\\ 
7&\texttt{20E}&20& {2}^{7} \cdot {3}^{4} \cdot {5} & {2}^{3} \cdot {3} \cdot {5} &10+10& &\textrm{VI}\\ 
8&\texttt{20F}&20& {2}^{9} \cdot {3}^{2} \cdot {5} & {2}^{6} \cdot {5}  &20& &\textrm{IV}\\ 
9&\texttt{40A}&40& {2}^{7} \cdot {3}^{2} \cdot {5}& {2}^{7} \cdot {3} &12+12+16& & \\ 
10&\texttt{40B}&40& {2}^{3} \cdot {3}^{2} \cdot {5} \cdot {7}& {2}^{7} \cdot {3}^{2}  &16+24&\texttt{40C}& \\ 
11&\texttt{40C}&40& {2}^{3} \cdot {3}^{2} \cdot {5} \cdot {7}& {2}^{7} \cdot {3}^{2}  &16+24&\texttt{40B}& \\ 
12&\texttt{40D}&40& {2}^{5} \cdot {3}^{2} \cdot {5}& {2}^{5} \cdot {3}^{2} \cdot {5}  &10+30&\texttt{40E}& \\ 
13&\texttt{40E}&40&{2}^{5} \cdot {3}^{2} \cdot {5} & {2}^{5} \cdot {3}^{2} \cdot {5}  &10+30&\texttt{40D}& \\ 
14&\texttt{96A}&96&{2}^{5} \cdot {3}^{2} & {2}^{13} \cdot {3}  &32+64& & \\ 
15&\texttt{96B}&96&{2}^{3} \cdot {3}^{2}& {2}^{12} \cdot {3}^{3} &96&\texttt{96C}& \\ 
16&\texttt{96C}&96&{2}^{3} \cdot {3}^{2}& {2}^{12} \cdot {3}^{3}  &96&\texttt{96B}& \\ 
17&\texttt{infty}&\infty& & & & & \\ 
\end{array}
\]
\vskip .1cm
\caption{Induced chambers}\label{table:inducedchams}
\end{table}
\par
The data of the induced chambers $D$ are given in Table~\ref{table:inducedchams}.
Before explaining the contents of Table~\ref{table:inducedchams},
we recall two classical results about automorphism groups of  Enriques surfaces.
Let $Y$ be an Enriques surface.
We denote by  $\PPP_Y$  the positive cone of $S_Y\tensor \R$
containing an ample class.
We then put
\[
N_{Y}:=\set{x\in \PPP_Y}{\intf{x, [\Gamma]}\ge 0\;\;\textrm{for all curves $\Gamma$ on $Y$}\;}.
\]
Then $N_{Y}$ has a tessellation by Vinberg chambers,
because $N_{Y}$ is bounded by the hyperplanes $([\Gamma])\sperp$
defined by the classes $[\Gamma]$ of smooth rational curves $\Gamma$ on $Y$
and every smooth rational curve on $Y$ has the self-intersection number $-2$.
\par
Let $Y$ be a complex \emph{generic} Enriques surface.
Then we have $\PPP_Y=N_Y$.
Barth and Peters~\cite{BP1983} showed that
$\Aut(Y)$ is canonically identified with the kernel of
the $\bmod\, 2$-reduction homomorphism
$\OGP{S_Y}\to \OGP{S_Y}\tensor \F_2$.
Since a Vinberg chamber has no automorphism group,
the group  $\OGP{S_Y}$ is equal to the subgroup
$W(S_Y)$.
Since the $\bmod\, 2$-reduction homomorphism above is
surjective~(see~\cite{BP1983} and Section~\ref{subsec:proof17} of this paper),
there exists a union $\VVV$ of
\[
| \OGP{S_Y}\tensor \F_2|=46998591897600=\oLtenFtwo
\]
Vinberg chambers such that (i) $\PPP(L_{10})$ is the union of $\VVV^g$,
where $g$ runs through $\Aut(Y)$,
and (ii) if $g\in \Aut(Y)$ is not the identity,
then the interiors of $\VVV$ and of $\VVV^g$ are disjoint.
\par
Kondo~\cite{Kondo1986} and Nikulin~\cite{Nikulin1984}
classified all complex Enriques surfaces with finite automorphism group.
This classification was extended to odd characteristics by Martin~\cite{Martin2019}.
It turns out that Enriques surfaces in characteristic $\ne 2$
with finite automorphism group are divided into $7$ classes
\textrm{I}, \dots, \textrm{VII}.
An Enriques surface $Y$ with finite automorphism group
has only a finite number of smooth rational curves $\Gamma$,
and $N_{Y}$ is bounded by the hyperplanes
$([\Gamma])\sperp$ defined by these  curves.
The configurations of these smooth rational curves
are explicitly depicted in~\cite{Kondo1986}.
\par
\medskip
{\bf Explanation of  Table~\ref{table:inducedchams}.}
The item \texttt{walls} is the number of walls of an induced chamber $D$.
Since every wall of $D$ is defined by a $(-2)$-vector of $L_{10}$,
it follows that $D$ is a union of Vinberg chambers.
The item \texttt{volindex} shows that
the number of Vinberg chambers contained in $D$ is equal to
\[
|  \OGP{S_Y}\tensor \F_2|/\texttt{volindex}=\oLtenFtwo/\texttt{volindex}.
\]
The item \texttt{gD} is the order of the automorphism group $\OG(L_{10}, D)$ of $D$.
The item \texttt{orb} describes the orbit decomposition of the set of walls
under the action of $\OG(L_{10}, D)$.
The item \texttt{isom} shows that, for example,
the induced chambers of  the primitive embeddings \texttt{20C}
and \texttt{20D} are isomorphic.
The item \texttt{NK} shows that, for example,
the induced chamber of  the primitive embedding $\texttt{12A}$
is, under a suitable isomorphism $L_{10}\cong S_Y$,  equal to
 $N_{Y}$ of an Enriques surface $Y$ 
with finite automorphism group 
of type $\textrm{I}$.
\par
\medskip
Since all $7$ types \textrm{I}, \dots, \textrm{VII} appear in the column \texttt{NK},
our results on the induced chamber $D$ can be applied to $N_Y$
for an arbitrary Enriques surface $Y$ with finite automorphism group
in characteristic $\ne 2$.
\par
Borcherds' method has been applied to Enriques surfaces
in~\cite{Shimada2016}~and~\cite{ShimadaHessian}
without using the facts proved in this paper.
These facts actually give us a big advantage in
the calculation of the automorphism group $\Aut(Y)$
of an Enriques surface $Y$ by Borcherds' method,
as is exemplified 
in~\cite{BRS2019}~and~\cite{ShimadaVeniani2019}.
We can also enumerate all polarizations of $Y$ with a fixed degree
modulo $\Aut(Y)$ by means of the method in~\cite{ShimadaHoles}.
These applications will be treated in other papers.
In~\cite{DKbook}, 
an interesting relation of 
our list with Enriques surfaces in characteristic $2$ 
is discussed.
%
\par
For the computation, the first author used a mixture of {\tt SageMath, PARI, GAP} \cite{sagemath,PARI2,GAP},
and the second author used {\tt GAP}~\cite{GAP}.
The explicit computational data is available
at the second author's webpage~\cite{thecompdataL10L26}.
\par
Thanks are due to Professor Igor Dolgachev and Professor Shigeyuki Kondo
for their interests in this work and many comments.
The authors also thank the referee for his/her valuable comments.
\par
\medskip
{\bf Notation.}
To avoid possible confusions between $L_{10}$ and $L_{10}(2)$,
we put
\[
\SS:=L_{10}(2).
\]
We identify the underlying $\Z$-modules of $L_{10}$ and $\SS$, and
choose positive cones so that $\PPP(L_{10})=\PPP(\SS)$.
We also have a natural identification $\OGP{L_{10}}=\OGP{\SS}$.
We denote by $\intfS{\,,\,}$,  $\intfL{\,,\,}$ and $\intfLL{\,,\,}$
the symmetric bilinear forms of $\SS$,   $L_{10}$, and $L_{26}$, respectively.
\section{Proof of Theorems~\ref{thm:seventeen}~and~\ref{thm:simple}}\label{sec:proof}
\subsection{Discriminant form}\label{subsec:discform}
Let $L$ be an even lattice.
Recall that $A(L)=L\dual/L$ is the discriminant group of $L$.
The  quadratic form
\[
q(L)\colon A(L)\to \Q/2\Z
\]
defined by $u \bmod L \mapsto \intf{u,u}\bmod 2\Z$ for $u\in L\dual$
is called the \emph{discriminant form} of $L$.
Let $\OG(q(L))$ denote the automorphism group of the finite quadratic form $q(L)$.
Then we have a natural homomorphism
\[
\eta(L)\colon \OG(L)\to \OG(q(L)).
\]
See Nikulin~\cite{Nikulin1979} for the basic properties of discriminant forms.
Among these properties, the following is especially important for us:
\begin{proposition}\label{prop:overlat}
Let $M$ and $N$ be even lattices.
We consider the following sets:
\begin{enumerate}[\rm (a)]
\item the set $\LLL$ of even unimodular lattices $L$ contained in  $M\dual\oplus N\dual$,
containing $M\oplus N$,
and containing each of $M$ and $N$  primitively, and
\item the set $\QQQ$ of isomorphisms between
the finite quadratic forms $q(M)$ and $-q(N)$.
\end{enumerate}
Let $\phi$ be an isomorphism from $q(M)$ to $-q(N)$,
let $\varGamma_{\phi}\subset A(M)\oplus A(N)$ denote the graph of $\phi$,
and let $L_{\phi}\subset M\dual\oplus N\dual$ be the pull-back of $\varGamma_{\phi}$
by the natural projection $M\dual\oplus N\dual\to A(M)\oplus A(N)$.
Then the mapping  $\phi\mapsto L_{\phi}$ gives rise to a bijection from $\QQQ$ to $\LLL$.
This  bijection $\QQQ\cong \LLL$
is compatible with the natural actions of $\OG(M)\times \OG(N)$
on $\QQQ$ and on $\LLL$.
\qed
\end{proposition}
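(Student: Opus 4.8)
The plan is to pass to the discriminant groups and reduce the statement to the classical dictionary between overlattices and isotropic subgroups. Set $K:=M\oplus N$, so that $K\dual=M\dual\oplus N\dual$, the discriminant group is $A(K)=A(M)\oplus A(N)$, and the discriminant form is $q(K)=q(M)\oplus q(N)$. I would invoke the basic fact from Nikulin's theory~\cite{Nikulin1979} that $L\mapsto H_L:=L/K$ is a bijection between the overlattices $L$ with $K\subset L\subset K\dual$ and the subgroups $H\subset A(K)$; under this correspondence $L$ is an \emph{even} lattice if and only if $q(K)$ vanishes on $H$ (that is, $H$ is isotropic), and, writing $H^{\perp}$ for the orthogonal complement of $H$ in $A(K)$ with respect to the bilinear form associated with $q(K)$, one has $A(L)\cong H^{\perp}/H$. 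In particular $L$ is unimodular exactly when $H=H^{\perp}$.

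Next I would translate the two primitivity conditions. An element $(x,y)\in M\dual\oplus N\dual$ lying in $L$ belongs to $M\tensor\Q$ precisely when $y=0$, and such an element lies in $M$ precisely when its class $\bar x\in A(M)$ vanishes; hence $M$ is primitively embedded in $L$ if and only if $H_L\cap(A(M)\oplus 0)=0$, equivalently the projection $H_L\to A(N)$ is injective. Symmetrically, $N$ is primitively embedded if and only if the projection $H_L\to A(M)$ is injective. Combining, a subgroup $H$ corresponds to a lattice $L\in\LLL$ exactly when $H$ is isotropic, satisfies $H=H^{\perp}$, and projects injectively to both $A(M)$ and $A(N)$.

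The crux is then to show that these conditions force $H$ to be the graph of an isomorphism of quadratic forms $\phi\colon q(M)\to -q(N)$. Injectivity of both projections means $H$ is the graph of an isomorphism $\phi$ from a subgroup $A_M\subseteq A(M)$ onto a subgroup $A_N\subseteq A(N)$, so $|H|=|A_M|=|A_N|$. From $H=H^{\perp}$ one gets $|H|^2=|A(K)|=|A(M)|\cdot|A(N)|$, whence $|A_M|^2=|A(M)|\cdot|A(N)|$; together with $|A_M|\le|A(M)|$ and $|A_N|\le|A(N)|$ this can hold only if $A_M=A(M)$ and $A_N=A(N)$, so $\phi$ is a group isomorphism $A(M)\xrightarrow{\sim}A(N)$. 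Isotropy of $H=\varGamma_{\phi}$ reads $q(M)(a)+q(N)(\phi(a))=0$ in $\Q/2\Z$ for all $a$, i.e. $\phi$ carries $q(M)$ to $-q(N)$, so $\phi\in\QQQ$. Conversely, for any $\phi\in\QQQ$ the graph $\varGamma_{\phi}$ is isotropic with $|\varGamma_{\phi}|=|A(M)|$ and $|A(K)|=|A(M)|^2$, forcing $\varGamma_{\phi}=\varGamma_{\phi}^{\perp}$; thus $L_{\phi}\in\LLL$. This shows $\phi\mapsto L_{\phi}$ is a well-defined bijection $\QQQ\to\LLL$, with inverse $L\mapsto(\text{the }\phi\text{ whose graph is }H_L)$.

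Finally, for equivariance I would note that $\OG(M)\times\OG(N)$ acts on $A(M)\oplus A(N)$ through $\eta(M)\times\eta(N)$, preserving $q(M)\oplus q(N)$, hence sending isotropic subgroups to isotropic subgroups and graphs to graphs; unravelling the definitions shows that the image of $\varGamma_{\phi}$ under $(g,h)$ is again a graph and that the corresponding overlattice is the image of $L_{\phi}$ under $(g,h)$ acting on $K\dual$, which is exactly the asserted compatibility. The only genuinely delicate point is the counting step isolating $A_M=A(M)$ and $A_N=A(N)$; once $H$ is known to be a full graph, every remaining verification is a direct unwinding of the discriminant-form definitions.
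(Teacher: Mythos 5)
Your proof is correct, and it is the standard argument: the paper itself offers no proof of this proposition, stating it with a reference to Nikulin's theory of discriminant forms, and your reduction to the overlattice--isotropic-subgroup dictionary (evenness $\leftrightarrow$ isotropy, unimodularity $\leftrightarrow$ $H=H^{\perp}$, primitivity $\leftrightarrow$ injectivity of the two projections, plus the counting step forcing the graph to be total) is exactly the classical derivation the citation points to. The only point worth making explicit in a final write-up is that vanishing of the quadratic form on $H$ already forces vanishing of the associated bilinear form (so isotropy for $q$ gives both integrality and evenness of the overlattice), which you implicitly use when concluding $H\subset H^{\perp}$.
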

Suppose that $L\in \LLL$,
so that $N$ is the orthogonal complement of the primitive sublattice $M\subset L$.
Let $\phi\colon q(M) \isom -q(N) $ be the isomorphism  corresponding to $L$,
and $\OG(\phi)\colon \OG(q(M)) \isom \OG(q(N))$ the induced isomorphism.
We put
\[
\OG(L, M):=\set{\tilde{g}\in \OG(L)}{\,\textrm{$\tilde{g}$ preserves $M$}\,},
\]
and let $\tilde{g}\mapsto \tilde{g}|M$ and $\tilde{g}\mapsto \tilde{g}|N$
denote the restriction homomorphisms from $\OG(L, M)$ to $\OG(M)$ and $\OG(N)$, respectively.
We say that $\tilde{g}\in \OG(L, M)$ is a \emph{lift} of $g\in \OG(M)$
if $\tilde{g}|M=g$.
\begin{corollary}\label{cor:lifts}
Let $g$ be an isometry of $M$.
Then the  homomorphism
$\tilde{g}\mapsto \tilde{g}|N$ induces a bijection
from the set of lifts $\tilde{g}$ of $g$ to the set of all isometries $h\in \OG(N)$ of $N$
such that $\eta(M)(g)\in \OG(q(M))$ is mapped to $\eta(N)(h)\in \OG(q(N))$ by $\OG(\phi)$.
\qed
\end{corollary}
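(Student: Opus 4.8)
The plan is to read off the statement from the $\OG(M)\times\OG(N)$-equivariance of the bijection $\QQQ\cong\LLL$ established in Proposition~\ref{prop:overlat}. The guiding idea is that an isometry $\tilde g\in\OG(L,M)$ is nothing but a compatible pair of isometries of $M$ and of $N$ that respects the gluing data $\phi$, and that ``respecting $\phi$'' translates exactly into the discriminant-form condition in the statement.

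First I would make two elementary reductions. Because $N$ is the orthogonal complement of $M$ in $L$, every $\tilde g\in\OG(L,M)$ also preserves $N$; thus $g:=\tilde g|M\in\OG(M)$ and $h:=\tilde g|N\in\OG(N)$ are defined, and $\tilde g$ agrees with the pair $(g,h)$ on $M\oplus N$. Since $M\oplus N$ has finite index in $L$, the isometry $\tilde g$ is determined by $(g,h)$, so for a fixed $g$ the map $\tilde g\mapsto\tilde g|N$ is injective on the set of lifts of $g$. Hence the corollary amounts to identifying the image of this map, namely the set of $h\in\OG(N)$ for which the pair $(g,h)$ extends to an isometry of $L$.

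Next I would pass to the dual lattices and use the graph description of $L$. Each of $g$ and $h$ extends uniquely to an isometry of $M\dual$ and of $N\dual$, so $(g,h)$ acts on $M\dual\oplus N\dual$ preserving $M\oplus N$, and the induced action on $A(M)\oplus A(N)$ is $\eta(M)(g)\oplus\eta(N)(h)$. Writing $L=L_\phi$ as the preimage of the graph $\varGamma_\phi=\shortset{(\bar u,\phi(\bar u))}{\bar u\in A(M)}$, I see that $(g,h)$ preserves $L$ exactly when it preserves $\varGamma_\phi$. A direct graph computation shows that $(g,h)$ sends $\varGamma_\phi$ to another graph, and that this image equals $\varGamma_\phi$ if and only if $\phi$ intertwines $\eta(M)(g)$ and $\eta(N)(h)$; by the very definition of $\OG(\phi)$ as conjugation by $\phi$, this is the relation
\[
\OG(\phi)(\eta(M)(g))=\eta(N)(h),
\]
which is precisely the condition in the statement. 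Conversely, any $h\in\OG(N)$ satisfying it gives a pair $(g,h)$ preserving $L_\phi=L$, hence a lift $\tilde g$ with $\tilde g|N=h$; this proves surjectivity onto the described set. Alternatively, one obtains the same conclusion more quickly by invoking the equivariance clause of Proposition~\ref{prop:overlat} directly: $(g,h)$ stabilizes $L_\phi\in\LLL$ if and only if it stabilizes $\phi\in\QQQ$, which is again the displayed relation.

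I do not anticipate any genuine obstacle, since the whole statement is forced by Proposition~\ref{prop:overlat}. The only point demanding care is the bookkeeping in the graph computation --- especially the order of composition and the direction of the conjugation defining $\OG(\phi)$, compounded by the convention that $\OG$ acts on the right --- so that the final intertwining relation comes out as $\OG(\phi)(\eta(M)(g))=\eta(N)(h)$ and not its inverse.
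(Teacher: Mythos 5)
Your proposal is correct and follows exactly the route the paper intends: the corollary is stated with an immediate \qed as a direct consequence of the $\OG(M)\times\OG(N)$-equivariance in Proposition~\ref{prop:overlat}, and your elaboration (injectivity from $M\oplus N$ having finite index in $L$, surjectivity from the graph computation showing $(g,h)$ preserves $\varGamma_\phi$ iff $\OG(\phi)(\eta(M)(g))=\eta(N)(h)$) is precisely the bookkeeping being suppressed. No gaps.
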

\subsection{Kneser's neighbor method}
This method allows us to efficiently compute all lattices in a given genus.
We review the basic idea.
For proofs and a more complete treatment, see \cite{kneser} and  \cite{neighbor}.
In this subsection, we assume that 
all lattices are positive or negative definite.
\par
Recall that two lattices $L$ and $L'$ are \emph{in the same genus} if
we have isomorphisms
\[
L\otimes \Z_p \cong L'\otimes \Z_p \;\;\mbox{  and  }\;\; L \otimes \R \cong L' \otimes \R
\]
of $\Z_p$- or $\R$-valued quadratic modules for every prime $p$,
where $\Z_p$ denotes the ring of  $p$-adic integers.
Suppose that $L$ and $L'$ are in the same genus.
Then,  by the Hasse--Minkowski theorem, we have  $L \otimes \Q\cong L' \otimes \Q$.
Thus we may and will assume that $L \otimes \Q = L' \otimes \Q$.
Let $p$ be an odd prime  which does not divide the determinant $\det L:=|A(L)|$ of $L$.
We say that two lattices $L$ and $L'$ are \emph{$p$-neighbors} if
\[
p=[L: L \cap L']=[L': L \cap L'].
\]
Suppose that  $L$ and $L'$ are $p$-neighbors.
Then $L \otimes \Z_q = L' \otimes \Z_q$ for all primes $q \neq p$.
Moreover,
since $p$ does not divide $\det L$,
both $L \otimes \Z_p$ and $L' \otimes \Z_p$ are unimodular $\Z_p$-lattices isomorphic over
the field of $p$-adic rationals $\Q_p$.
Thus $L\otimes \Z_p$ and $L' \otimes \Z_p$ are in fact isomorphic.
We have proved that the $p$-neighbors $L$ and $L'$ are in the same genus.
\par
For a given genus $\mathcal{G}$, we denote by $C$ the set of isomorphism classes $[L]$
of lattices $L$ in this genus.
Let $p$ be an odd prime.
Set
\[
E := \{([L],[L']) \in C \times C \mid L \mbox{ and } L' \mbox{ are } p \mbox{-neighbors} \}.
\]
Then $(C,E)$ is called the \emph{$p$-neighbor graph} of $\mathcal{G}$.
Assume further that $L \otimes \Z_p$ represents $0$
for a lattice $L$ in this genus.
This is certainly the case
if the rank of $L$ is at least $5$.
In general each connected component of this graph is
the union of several so called proper spinor genera.
In the case relevant to us,
the genus consists of a single proper spinor genus, so this does not concern us.
\par
For given $L$ and $v \in L \setminus pL$ with $\intf{v,v} \in p^2 \Z_p$,
the lattice
\[
L(v):=L_v + \Z (v/p)\mbox{ where }L_v = \{x \in L \mid \intf{x,v} \in p\Z\}
\]
is called
the \emph{$p$-neighbor of $L$ with respect to $v$}.
One can show that
$L$ and $L(v)$ are indeed $p$-neighbors, that $L_v$ depends only on $v \mod pL$
(as long as $\intf{v,v}$ stays divisible by $p^2$),
and that every $p$-neighbor of $L$ arises in this fashion.
\par
Thus one can classify lattices in the genus $\mathcal{G}$
by iteratively computing the neighbors of the lattices in $C$ and testing for isomorphism (see \cite{isometries}).
One can speed this up by computing the neighbors of a given lattice only up to the action of the orthogonal group.
When we are interested only in the vertices and not in the edges,
we can break the computation when we have ``explored" all vertices.
The \emph{mass} of the genus $\mathcal{G}$ is defined as
\[
\mbox{mass}(\mathcal{G}):= \sum_{[L] \in \mathcal{G}} \frac{1}{|\OG(L)|}.
\]
It can be calculated from the invariants of $\mathcal{G}$ alone as described in \cite{CS1988}.
We can break the computation as soon as
the sum of the reciprocals of $|\OG(L)|$ reaches $\mbox{mass}(\mathcal{G})$.

This procedure is implemented for example in {\tt Magma} \cite{magma}.
In the example relevant to us,
the computation with {\tt Magma} simply exhausted all memory available.
Thus we had to resort to a modified strategy: A random walk through the neighbor graph.
\subsection{Proof of Theorem~\ref{thm:seventeen}}\label{subsec:proof17}
Let $e_1, \dots, e_{10}$ be a basis of $L_{10}$ consisting of $(-2)$-vectors
that form the configuration in Figure~\ref{fig:L10basis}.
Then
\begin{equation}\label{eq:V}
V:=\set{x\in \PPP(L_{10})}{\intfL{x, e_i}\ge 0\;\;\textrm{for}\;\; i=1, \dots, 10}
\end{equation}
is a Vinberg chamber,
and each $V\cap (e_i)\sperp$ is a wall of $V$  (see Vinberg~\cite{Vinberg1975}).
\begin{figure}
\def\ha{40}
\def\hav{37}
\def\hd{25}
\def\hdv{22}
\def\he{10}
\def\hev{7}
\setlength{\unitlength}{1.5mm}
{\small
\begin{picture}(80,11)(-5, 6)
\put(22, 16){\circle{1}}
\put(23.5, 15.5){$e\sb 1$}
\put(22, 10.5){\line(0,1){5}}
\put(9.5, \hev){$e\sb 2$}
\put(15.5, \hev){$e\sb 3$}
\put(21.5, \hev){$e\sb 4$}
\put(27.5, \hev){$e\sb 5$}
\put(33.5, \hev){$e\sb 6$}
\put(39.5, \hev){$e\sb 7$}
\put(45.5, \hev){$e\sb {8}$}
\put(51.5, \hev){$e\sb {9}$}
\put(57.5, \hev){$e\sb {10}$}
\put(10, \he){\circle{1}}
\put(16, \he){\circle{1}}
\put(22, \he){\circle{1}}
\put(28, \he){\circle{1}}
\put(34, \he){\circle{1}}
\put(40, \he){\circle{1}}
\put(46, \he){\circle{1}}
\put(52, \he){\circle{1}}
\put(58, \he){\circle{1}}
\put(10.5, \he){\line(5, 0){5}}
\put(16.5, \he){\line(5, 0){5}}
\put(22.5, \he){\line(5, 0){5}}
\put(28.5, \he){\line(5, 0){5}}
\put(34.5, \he){\line(5, 0){5}}
\put(40.5, \he){\line(5, 0){5}}
\put(46.5, \he){\line(5, 0){5}}
\put(52.5, \he){\line(5, 0){5}}
\end{picture}
}
\caption{Basis of $L_{10}$}\label{fig:L10basis}
\end{figure}
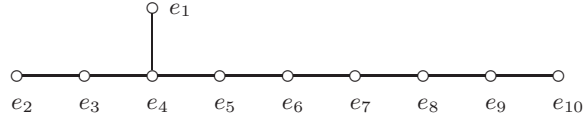
Since the graph in Figure~\ref{fig:L10basis} has no non-trivial automorphisms,
the group $\OGP{L_{10}}$ is generated by the $10$ reflections $s_1, \dots, s_{10}$ with respect to
$e_1, \dots, e_{10}$.
Recall from the paragraph Notation at the end of Introduction that
we put $\SS:=L_{10}(2)$.
In $L_{10}\tensor\Q=\SS\tensor \Q$,
we have
$L_{10}=L_{10}\dual=\SS\subset \SS\dual$,
and the mapping $v\mapsto v/2$ gives an isomorphism $L_{10}\cong \SS\dual$
of $\Z$-modules,
which  gives rise to an isomorphism
\[
(L_{10}/2 L_{10}, q_L)\cong q(\SS),
\quad
\textrm{where $q_{L} (u\bmod 2 L_{10}):=\frac{1}{2}\intfL{u, u}\bmod 2\Z$}
\]
of finite quadratic forms.
Hence we see that $|\OG(q(\SS))|=\oLtenFtwo$
 by Proposition 1.7 of~\cite{BP1983}.
Since we have explicit generators $s_1, \dots, s_{10}$ of $\OGP{L_{10}}=\OGP{\SS}$,
we can confirm that
$\eta(\SS)\colon \OG(\SS)\to \OG(q(\SS))$  restricted to $\OGP{L_{10}}$
is surjective.
%
%
Let $\iota\colon \SS\inj L_{26}$ be a primitive embedding,
and let $R_{\iota}$ be the orthogonal complement of the image of $\iota$ in $L_{26}$.
Then $R_{\iota}$ is of signature $(0, 16)$.
By Proposition~\ref{prop:overlat},
the discriminant form $q(R_{\iota})$ is isomorphic to $-q(\SS)$.
Since $\eta(\SS)$ is surjective,
Proposition~\ref{prop:overlat} implies that,
if a primitive embedding $\iota\sprime\colon \SS\inj L_{26}$
satisfies $R_{\iota\sprime}\cong R_{\iota}$, then
$\iota\sprime$ is equivalent  to $\iota$
up to the action of $\OG(\SS)=\OG(L_{10})$ and $\OG(L_{26})$.
Hence the proof of Theorem~\ref{thm:seventeen} is reduced to the classification of
isomorphism classes of even lattices $R$
with signature $(0, 16)$ such that $q(R)\cong -q(\SS)$.
Note that these conditions on signature and discriminant form
determine the genus $\GGG_R$ of $R$.
By the mass formula~\cite{CS1988},
we see that  the mass of this genus is
\begin{equation*}\label{eq:mass}
 \mathrm{mass}(\GGG_R)=64150367/28766348771328000.
\end{equation*}
\par
Let $u\colon  \F_2^2 \rightarrow \Q / 2\Z$ be defined by $u(x,y):=xy$.
Then one calculates that
\[
-q(\SS)\cong -u^{\oplus 5}=u^{\oplus 5}\cong q(R)
\]
and that $q(D_8)\cong u$ and $q(E_8(2))\cong u^{\oplus 4}$,
where $D_8$ and $E_8$ are the negative-definite root lattices of
$\ADE$-type $D_8$ and $E_8$, respectively.
Thus we have found a first lattice $L=D_8 \oplus E_8(2) $ in $\GGG_R$.
To find representatives up to isomorphism,
we use a variant of Kneser's neighbor method for $p=3$.
Start by inserting $L$ into a list $C$.
Then enter the following loop.
Pick a random $L$ in $C$ and a random $v\in L\setminus 3L$ with $\intf{v,v}$ divisible by $3$,
replace $v$ by $v +3w$ for $w \in L$ such that $\intf{v,v}$ is divisible by $9$.
Calculate the $3$-neighbor $L(v)$ and check if it is isomorphic to any lattice in the list $C$.
If not add it to $C$.
Break the loop when the mass of the lattices in $C$ matches $\mathrm{mass}(\GGG_R)$.
By this computation,
it turns out that $\GGG_R$ is constituted by $17$ isomorphism classes in Table~\ref{table:seventeen},
and hence Theorems~\ref{thm:seventeen} follows.
\qed
\subsection{Conway theory}\label{subsec:weyl}
Let $\weyl$ be a non-zero primitive vector of $L_{26}$
contained in  $\bdr\overline{\PPP}(L_{26})$.
Note that $\intfLL{\weyl, \weyl}=0$.
We put
\[
[\weyl]:=\Z \weyl, \quad [\weyl]\sperp:=\set{v\in L_{26}}{\intfLL{v, \weyl}=0}.
\]
Then  $[\weyl]\sperp/[\weyl]$ has a natural structure of an even unimodular negative-definite lattice,
and hence is isomorphic to $N(-1)$,
where $N$ is one of the $24$ Niemeier lattices
(see, for example, Chapter 16 of~\cite{CSBook}).
\begin{definition}
We say that $\weyl$ is a Weyl vector if $[\weyl]\sperp/[\weyl]$
is isomorphic to the negative-definite Leech lattice.
\end{definition}
Since the Leech lattice is characterized as the unique Niemeier lattice
with no roots,
we can determine whether $\weyl$ is a Weyl vector or not by
calculating the set $\RRR([\weyl]\sperp/[\weyl])$
of $(-2)$-vectors in $[\weyl]\sperp/[\weyl]$.
\par
For a Weyl vector $\weyl$, we put
\[
C(\weyl):=\set{x\in \PPP(L_{26})}{\intfLL{x, r}\ge 0\;\textrm{for all}\; r\in \RRR(L_{26})\;\textrm{with}\; \intfLL{r, \weyl}=1}.
\]
The following theorem is very important.
\begin{theorem}[Conway~\cite{Conway1983}]
The mapping $\weyl\mapsto C(\weyl)$ gives a bijection from the set of Weyl vectors
to the set of Conway chambers.
\qed
\end{theorem}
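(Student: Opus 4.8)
The plan is to identify $C(\weyl)$ with the fundamental chamber of the reflection group $W(L_{26})$ cut out by the \emph{Leech roots} of $\weyl$, and to read off the bijection from the simple transitivity of $W(L_{26})$ on Conway chambers. Fix a Weyl vector $\weyl$ and set $\Delta:=\set{r\in\RRR(L_{26})}{\intfLL{r,\weyl}=1}$; for a root $v\in\RRR(L_{26})$ I call $\intfLL{v,\weyl}$ its \emph{height}. The first observation is that no root is orthogonal to $\weyl$: if $\intfLL{v,\weyl}=0$ with $v\notin[\weyl]$, then the image of $v$ in $[\weyl]\sperp/[\weyl]$ would be a $(-2)$-vector, contradicting that $[\weyl]\sperp/[\weyl]$ is the Leech lattice, which has no roots. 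Hence the height is a nonzero integer, $\RRR(L_{26})$ splits into roots of positive and of negative height, and $\Delta$ consists exactly of the roots of height $1$. Note also that $\intfLL{\weyl,r}=1>0$ for every $r\in\Delta$, so $\weyl\in C(\weyl)$ is an ideal vertex of the cone. The goal is to prove that $\Delta$ is a simple system for $W(L_{26})$ and that $C(\weyl)$ is the associated fundamental domain; granting this, $C(\weyl)$ is a single $\RRR(L_{26})\sperp$-chamber, that is, a Conway chamber.

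The technical heart is the assertion that every root $v$ of positive height is a non-negative integral combination of $\Delta$, proved by induction on the height $h=\intfLL{v,\weyl}$. For $h=1$ this is the definition of $\Delta$. For $h\ge 2$ the inductive step rests on the following \emph{height-reduction lemma}: there exists $r\in\Delta$ with $\intfLL{v,r}<0$, and for any such $r$ the reflected root $v^{s_r}=v+\intfLL{v,r}\,r$ again has positive height, which is strictly smaller than $h$ because $\intfLL{v^{s_r},\weyl}=h+\intfLL{v,r}$. Applying the inductive hypothesis to $v^{s_r}$ and rewriting $v=v^{s_r}-\intfLL{v,r}\,r$ then exhibits $v$ as a non-negative combination of $\Delta$. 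I expect this lemma to be the main obstacle, and it is the one genuinely deep input: passing to coordinates $L_{26}\cong\Lambda\oplus U$ in which $\weyl$ is the standard isotropic generator of the hyperbolic plane $U$, the negative-definite Leech lattice $\Lambda$ is the reduction $[\weyl]\sperp/[\weyl]$, and $\Delta$ is indexed by the points of $\Lambda$, the existence of a height-lowering Leech root (and the positivity of the reduced height) is exactly equivalent to Conway's theorem that the covering radius of $\Lambda$ equals $\sqrt{2}$, i.e.\ to the classification of its deep holes.

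Granting the lemma, the remaining steps are formal. Since every root is, up to sign, a non-negative combination of $\Delta$, no mirror $(v)\sperp$ meets the interior of $C(\weyl)$, so by the standard theory of reflection groups of hyperbolic lattices (Vinberg~\cite{Vinberg1975}) the set $C(\weyl)$ is a standard fundamental domain of $W(L_{26})$ as in Example~\ref{example:DRRR}, with walls defined by roots of $\Delta$, and $W(L_{26})$ acts on the set of Conway chambers transitively. This proves that $\weyl\mapsto C(\weyl)$ is well defined into the set of Conway chambers. For injectivity I would recover $\weyl$ from the chamber: its walls determine a subset of $\Delta$ that spans $L_{26}\tensor\Q$ (otherwise the finite-covolume domain $C(\weyl)$ would be invariant under a translation), so the linear conditions $\intfLL{\weyl,r}=1$ on these wall roots pin down $\weyl$ uniquely in $L_{26}\tensor\Q$; primitivity together with $\weyl\in\bdr\overline{\PPP}(L_{26})$ then fixes the vector, whence distinct Weyl vectors give distinct chambers.

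Finally I would prove surjectivity. A Weyl vector exists because $\Lambda\oplus U$ is even unimodular of signature $(1,25)$, hence isomorphic to $L_{26}$ by the uniqueness stated in the introduction, and the standard generator $\weyl_0$ of $U$ satisfies $[\weyl_0]\sperp/[\weyl_0]\cong\Lambda$, the Leech lattice. Now let $D$ be an arbitrary Conway chamber. By the transitivity established above there is $g\in W(L_{26})\subset\OGP{L_{26}}$ with $D=C(\weyl_0)^g$. Because $g$ is an isometry and the construction $C(\cdot)$ is equivariant (an isometry permutes $\RRR(L_{26})$ and preserves $\intfLL{\,,\,}$), one has $C(\weyl_0)^g=C(\weyl_0^{\,g})$; moreover $g$ induces an isomorphism $[\weyl_0]\sperp/[\weyl_0]\cong[\weyl_0^{\,g}]\sperp/[\weyl_0^{\,g}]$, so $\weyl_0^{\,g}$ is again a Weyl vector. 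Hence $D=C(\weyl_0^{\,g})$ lies in the image, and $\weyl\mapsto C(\weyl)$ is a bijection.
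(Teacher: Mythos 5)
The paper does not prove this theorem at all --- it is imported from \cite{Conway1983} without proof --- so there is no in-paper argument to compare against; what you have written is a reconstruction of Conway's original proof, and in outline it is the right one. You correctly isolate the two pillars: the absence of height-zero roots (immediate, as you say, from the Leech lattice having no roots) and the height-reduction lemma, whose content is the Conway--Parker--Sloane theorem that the covering radius of the Leech lattice is $\sqrt 2$. Since you flag that lemma rather than prove it, let me record what the verification actually requires. Writing $L_{26}=\Lambda\oplus U$ with $U=\Z\weyl+\Z\weyl\sprime$, the Leech root attached to $\lambda\in\Lambda$ is $r_\lambda=\lambda+\tfrac{|\lambda|^2-2}{2}\weyl+\weyl\sprime$ (norms $|\cdot|^2$ taken positive definite on $\Lambda$), and for a root $v=\mu+m\weyl+n\weyl\sprime$ of height $n\ge 2$ one computes $\intfLL{v,r_\lambda}=\tfrac n2\bigl(|\tfrac{\mu}{n}-\lambda|^2-2-\tfrac{2}{n^2}\bigr)$. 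Covering radius $\le\sqrt2$ supplies $\lambda$ with $\intfLL{v,r_\lambda}\le -\tfrac1n<0$, while the trivial bound $|\tfrac{\mu}{n}-\lambda|^2\ge 0$ gives $\intfLL{v,r_\lambda}\ge -n-\tfrac1n$; hence the reflected height $n+\intfLL{v,r_\lambda}$ is an integer exceeding $-1$, so it is $\ge 0$, so it is $\ge 1$ by the no-height-zero step. Your text asserts the positivity of the reduced height without any such estimate; it is not automatic, and it is exactly the point where the covering radius is used, so it should not be left as an unexamined clause inside the lemma.

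The one outright error is in the injectivity step: $C(\weyl)$ is \emph{not} of finite covolume. Its stabilizer in $\OGP{L_{26}}$ is the full group of affine automorphisms of the Leech lattice, which is infinite and contains $\Lambda$ acting by ``translations''; equivalently, $W(L_{26})$ has infinite index in $\OGP{L_{26}}$. So the chamber genuinely is invariant under an infinite group, and the parenthetical justification proves nothing. The conclusion you want is still true and easy to repair: first, $\Delta$ itself (not merely the set of wall-defining roots) is recoverable from the chamber, because the roots non-negative on $C(\weyl)$ are exactly those of positive height, and an element of $\Delta$ cannot be a sum of two or more such roots since pairing with $\weyl$ would express $1$ as a sum of at least two positive integers; second, $\Delta$ visibly spans $L_{26}\tensor\Q$, as it contains $-\weyl+\weyl\sprime$ together with $\lambda+\weyl+\weyl\sprime$ for every minimal vector $\lambda$ of $\Lambda$. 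The conditions $\intfLL{\weyl,r}=1$ for $r\in\Delta$ then pin down $\weyl$, with no need to decide which elements of $\Delta$ define honest walls (they all do, but that is a further computation via $\intfLL{r_\lambda,r_\mu}=\tfrac12|\lambda-\mu|^2-2\ge0$ which you have not made). With these two repairs the argument is complete; the surjectivity paragraph is fine as written.
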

\begin{remark}\label{rem:weylample}
Let $\weyl$ be a Weyl vector.
Since $\weyl$ is primitive and $L_{26}$ is unimodular,
there exists a vector $\weyl\sprime$ such that
$\intfLL{\weyl\sprime,\weyl\sprime}=0$ and $\intfLL{\weyl,\weyl\sprime}=1$.
Then  every $(-2)$-vector $r$ of $L_{26}$ with $\intfLL{\weyl,r}=1$ is written as
\[
 \alpha_{\lambda}\weyl+\weyl\sprime+\lambda, \;\;\;
 \textrm{where $\alpha_\lambda=\frac{-\intfLL{\lambda, \lambda}-2}{2}$ and $\intfLL{\weyl, \lambda}=\intfLL{\weyl\sprime,\lambda}=0$}.
\]
Since $\intfLL{\weyl, \lambda}=\intfLL{\weyl\sprime, \lambda}=0$ implies  $\intfLL{\lambda, \lambda}\le 0$,
we see that  $a_{26}:=2\weyl+\weyl\sprime$ is an interior point of $C(\weyl)$.
 \end{remark}
\subsection{Proof of Theorem~\ref{thm:simple}}\label{subsec:proofsimple}
In Section~\ref{subsec:proof17}, we have calculated the $17$ primitive embeddings
$\iota\colon \SS\inj L_{26}$ explicitly.
As was said in Notation, we identify $\PPP(\SS)$ and $\PPP(L_{10})$, and
denote by
$\iota_{\PPP}\colon \PPP(L_{10})\inj \PPP(L_{26})$
the induced inclusion.
\par
Our first task is to find a Weyl vector $\weyl$
such that $\iota_{\PPP}\inv (C(\weyl))$ is an induced chamber,
that is, $\iota_{\PPP}\inv  (C(\weyl))$ contains a non-empty open subset of $\PPP(L_{10})$.
Recall that we have fixed a basis $e_1, \dots, e_{10}$ of $L_{10}$.
We put
$a_{10}:=e_1\dual+\cdots+e_{10}\dual$,
where $e_1\dual, \dots, e_{10}\dual$
are the basis of $L_{10}\dual=L_{10}$ dual to $e_1, \dots, e_{10}$.
Then $a_{10}$ is an interior point of the Vinberg chamber $V$ defined by~\eqref{eq:V},
and we have $\intfL{a_{10}, a_{10}}=1240$.
By direct calculation, we confirm the equality
\begin{equation}\label{eq:a10interior}
\shortset{r\in \RRR(L_{26})}{ \intfLL{r, \iota(a_{10})}=0}
\;=\;
\shortset{r\in \RRR(L_{26})}{ \intfLL{r, \iota(x)}=0 \;\textrm{for all}\; x\in L_{10}},
\end{equation}
which means that, if  a hyperplane $(r)\sperp$ of $\PPP(L_{26})$ defined by $r\in \RRR(L_{26})$
passes through $\iota(a_{10})$, then $(r)\sperp$ contains the image of $\iota_{\PPP}$.
(Note that the second set in~\eqref{eq:a10interior} is identified with $\RRR(R_{\iota})$ by
the embedding $R_{\iota}\inj L_{26}$.)
Therefore $a_{10}$ is an interior point of an induced chamber $D$.
\begin{definition}
Let $L$ be an even hyperbolic lattice.
Suppose that $v_1, v_2$ are vectors of $\PPP(L)\cap (L\tensor \Q)$.
We say that a $(-2)$-vector $r\in \RRR(L)$
\emph{separates} $v_1$ and $v_2$ if $\intf{r, v_1}\cdot \intf{r, v_2}<0$.
We can calculate the set
of $(-2)$-vectors of $L$ separating $v_1$ and $v_2$
by the algorithm given in Section 3.3 of~\cite{ShimadaChar5}.
\end{definition}
We perturb $a_{10}$ to $a_{10}\sprime\in \PPP(L_{10})\cap (L_{10}\tensor\Q)$
in a general direction
so that $a_{10}\sprime$ is also an interior
point of the same induced chamber $D$ as  $a_{10}$,
that is, the equality~\eqref{eq:a10interior} remains true with
$\iota(a_{10})$ replaced by $\iota(a_{10}\sprime)$
and  there exist no $(-2)$-vectors $r$ of $L_{26}$
separating $\iota(a_{10})$ and $\iota(a_{10}\sprime)$.
We  choose an arbitrary  Weyl vector $\weyl_0$ of $L_{26}$,
and calculate a vector $a_{26}\in L_{26}$
in the interior of $C(\weyl_0)$
by~Remark~\ref{rem:weylample}.
We then calculate the set $\{\pm r_1, \dots, \pm r_N\}$
of $(-2)$-vectors of $L_{26}$
separating $\iota(a_{10}\sprime)$ and $a_{26}$.
We sort these $(-2)$-vectors  $r_1, \dots, r_N$
in such a way that the line segment
from $a_{26}$ to $\iota(a_{10}\sprime)$
intersects the hyperplanes $(r_1)\sperp, \dots, (r_N)\sperp$ in this order.
Since $a_{10}\sprime$ is a result of  general perturbation,
these $N$ intersection points
are distinct.
Let $s_{\nu}\in \OGP{L_{26}}$ be the reflection with respect to $r_{\nu}$.
We  move $\weyl_0$   by $s_1, \dots, s_N$ in this order and obtain a new Weyl vector
$\weyl:=\weyl_0^{s_1\cdots s_N}$.
Then $C(\weyl)$ contains $a_{26}^{s_1\cdots s_N}$ in its interior,
and there exist no $(-2)$-vectors of $L_{26}$ separating
$\iota(a_{10}\sprime)$ and $a_{26}^{s_1\cdots s_N}$.
Therefore $\iota_{\PPP}\inv (C(\weyl))$ is the induced chamber $D$ containing $a_{10}$
in its interior.
\par
Next we calculate the set of walls of the induced chamber $D=\iota_{\PPP}\inv  (C(\weyl))$.
We denote by $v\mapsto v_{\SS}$ and $v\mapsto v_R$
the orthogonal projections $L_{26}\to \SS\dual$ and $L_{26}\to R_{\iota}\dual$,
and let $\intfR{\,,\,}$ denote the symmetric bilinear form  of $R_{\iota}$.
It turns out that  $\intfS{\weyl_{\SS}, \weyl_{\SS}}>0$ holds
except for the case where $\iota$ is of type  ${\tt infty}$.
Henceforth we assume that $\iota$ is not of type  ${\tt infty}$.
We put
\begin{eqnarray*}
\RRR(L_{26}, \weyl) &:=&\set{r\in \RRR(L_{26})}{\intfLL{\weyl, r}=1}, \\
\RRR(L_{26}, D) &:=&\set{r\in  \RRR(L_{26}, \weyl)}{\intfS{r_{\SS}, r_{\SS}}<0}, \\
\RRR_D&:=&\set{r_{\SS}}{ r\in \RRR(L_{26}, D)}.
\end{eqnarray*}
For $r\in \RRR(L_{26})$,
the hyperplane $(r)\sperp$ of $\PPP(L_{26})$
intersects  the image of $\iota_{\PPP}$
if and only if $\intfS{r_{\SS}, r_{\SS}}<0$.
Hence we have
\[
D=\set{x\in \PPP(L_{10})}{\intfS{r_{\SS}, x}\ge 0\;\;\textrm{for all}\;\; r_{\SS}\in \RRR_D}.
\]
The set $\RRR_D$ can be calculated explicitly as follows.
Suppose that
 $r\in \RRR(L_{26}, D)$.
Then we have
\[
\intfS{\weyl_{\SS}, r_{\SS}}+\intfR{\weyl_R, r_R}=1,
\quad
\intfS{r_{\SS}, r_{\SS}}+\intfR{r_R, r_R}=-2.
\]
Since $R_{\iota}$ is negative-definite,
the condition $\intfS{r_{\SS}, r_{\SS}}<0$ implies  $-2<\intfR{r_R, r_R}\le 0$,
and if $\intfR{r_R, r_R}=0$,
then we would have $r_R=0$, $r=\iota(r_{\SS})$ and hence $r_{\SS}\in \SS$,
which is impossible because $\intfLL{r,r}=-2$ whereas
$\intfS{r_{\SS}, r_{\SS}}=2\intfL{r_{\SS}, r_{\SS}}$ is a multiple of $4$.
Since $\SS\dual = \frac{1}{2} \SS$,
the discriminant form of $\SS$ takes values in $\Z / 2\Z$.
Naturally, the same is true for $q(R_{\iota})\cong -q(\SS)$.
This means that $\intfR{v,v}$ is integral for any $v \in R_{\iota}\dual$.
In particular, if $v\in R_{\iota}\sp{\vee}$ satisfies $-2<\intfR{v, v}<0$, then $\intfR{v, v}=-1$.
Therefore we  have $r_R\in V_R$ for all $r\in \RRR(L_{26}, D)$, where
\[
V_R:=\set{v\in R_{\iota}\dual}{\intfR{v, v}=-1}.
\]
For $v\in V_R$, we put $a(v):=1-\intfR{\weyl_R, v}$,
and let $a(V_R)$ be the set $\shortset{a(v)}{v\in V_R}$.
For each $a\in a(V_R)$, we calculate
\[
V_{\SS}(a):=\set{u\in \SS\dual}{\intfS{\weyl_{\SS}, u}=a, \; \intfS{u, u}=-1},
\]
which is finite because $\intfS{\weyl_{\SS}, \weyl_{\SS}}>0$.
We calculate the sum $u+v \in \SS\dual \oplus R_{\iota}\dual$ for all pairs $(v, u)$
of $v\in V_R$ and $u\in V_{\SS}(a(v))$,
and check whether $u+v$ is in $L_{26}$ or not.
Thus we calculate
\[
\RRR(L_{26}, D)=\; L_{26}\;\cap \; \set{u+v}{v\in V_R, \;u\in V_{\SS}(a(v))}
\]
and the set $\RRR_D$.
It turns out that
the hyperplanes $(r_{\SS})\sperp$ for $r_{\SS}\in \RRR_D$ are distinct,
and that $\RRR_D$ spans $L_{10}\tensor\Q$ over $\Q$.
We will show that each $r_{\SS}\in \RRR_D$ defines a wall of $D$.
We can calculate the finite group $G$ of isometries of $L_{10}$
that preserves the finite set $\RRR_D\subset L_{10}$.
Recall that $a_{10}$ is an interior point of $D$.
We put
$\sigma_{10}:=\sum_{g\in G} a_{10}^g$,
which is
an interior point  of $D$
fixed by $G$.
We put $t:=-\intfS{\sigma_{10}, r_{\SS}}/\intfS{r_{\SS}, r_{\SS}}$,
and consider the point $\sigma\sprime_{10}:=\sigma_{10}+ t\cdot r_{\SS}$  on $(r_{\SS})\sperp$.
Then we have $\intfS{\sigma\sprime_{10}, r\sprime_{\SS}}>0$ for all
$r\sprime_{\SS} \in \RRR_D\setminus \{r_{\SS}\}$,
which means that $\sigma\sprime_{10}$ is an interior point of
the subset $D\cap (r_{\SS})\sperp$ of $(r_{\SS})\sperp$.
Therefore  $D\cap (r_{\SS})\sperp$  is a wall of $D$.
Note that, since $\intfS{r_{\SS}, r_{\SS}}=-1$ for $r_{\SS}\in \RRR_D$
and $2\SS\dual \subset \SS$, we see that $2r_{\SS}\in L_{10}$ and $\intfL{2 r_{\SS}, 2 r_{\SS}}=-2$.
Therefore each wall of $D$ is defined by a $(-2)$-vector $2r_{\SS}$ of $L_{10}$.
The group $G$ is equal to $\OG(L_{10}, D)$.
\par
The assertions in Theorem~\ref{thm:simple} and Table~\ref{table:inducedchams}
about the walls of an induced chamber are now proved
for the induced chamber $D=\iota_{\PPP}\inv(C(\weyl))$
defined by this particular Weyl vector $\weyl$.
The data \texttt{volindex} of $D$ in Table~\ref{table:inducedchams}
is calculated by the method given in~\cite{ShimadaHessian}.
\par
To prove that
the induced tessellation of $\PPP(L_{10})$ is simple and thus complete the proof of Theorem~\ref{thm:simple},
it is enough to prove the following:
\begin{proposition}\label{prop:gtilde}
For each wall $D\cap (r)\sperp$ of $D$, where $r\in \RRR(L_{10})$,
there exists an isometry $\tilde{g}\in \OGP{L_{26}}$
with the following property:
the isometry $\tilde{g}$
preserves the image of $\iota\colon\SS\inj L_{26}$
and its restriction  $\tilde{g}|\SS\in \OGP{\SS}=\OGP{L_{10}}$ to $\SS$
is equal to  the reflection $s_r\in \OGP{L_{10}}$  with respect to
$r\in \RRR(L_{10})$.
\end{proposition}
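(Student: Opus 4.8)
The plan is to realize $\tilde g$ as a \emph{gluing} of the reflection $s_r$ on $\SS$ with a suitable isometry of the orthogonal complement $R_{\iota}$, via the overlattice correspondence. Let $\phi\colon q(\SS)\isom -q(R_{\iota})$ be the isomorphism attached to $\iota$ by Proposition~\ref{prop:overlat}, so that $L_{26}$ is the pull-back of the graph $\varGamma_\phi$. By Corollary~\ref{cor:lifts}, a lift $\tilde g\in\OG(L_{26})$ of $s_r\in\OG(\SS)=\OG(L_{10})$ that preserves $\iota(\SS)$ exists precisely when we can exhibit an isometry $h\in\OG(R_{\iota})$ with $\OG(\phi)(\eta(\SS)(s_r))=\eta(R_{\iota})(h)$; the lift then restricts to $s_r$ on $\SS$ and to $h$ on $R_{\iota}$. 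So the whole task is to produce such an $h$.

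First I would make the discriminant action of $s_r$ explicit. Since $r$ defines a wall of $D$, the computation in Section~\ref{subsec:proofsimple} writes $r=2r_{\SS}$ with $r_{\SS}\in\SS\dual$, $\intfS{r_{\SS},r_{\SS}}=-1$, where $r_{\SS}$ is the $\SS$-component of a genuine $(-2)$-vector $r'\in\RRR(L_{26},D)$; let $r'_R\in R_{\iota}\dual$ be its $R_{\iota}$-component, so that $\intfR{r'_R,r'_R}=-1$ and $r'=r_{\SS}+r'_R$. A direct computation on $\SS\dual=\frac{1}{2}L_{10}$ then identifies $\eta(\SS)(s_r)$ with the reflection of the finite quadratic form $q(\SS)$ in the class $[r_{\SS}]\in A(\SS)$, which is nonzero (as $r$ is primitive) and anisotropic ($q(\SS)([r_{\SS}])=1$).

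The key point is that the vector $r'$ lying in $L_{26}$ pins down $\phi$ on this class: the image of $r'$ under $\SS\dual\oplus R_{\iota}\dual\to A(\SS)\oplus A(R_{\iota})$ is $([r_{\SS}],[r'_R])$, and since this must lie on $\varGamma_\phi$ we get $\phi([r_{\SS}])=[r'_R]$. I would then set $h:=s_{2r'_R}$, the reflection of $R_{\iota}$ in the vector $2r'_R$; this is well defined because $r'_R\in R_{\iota}\dual$ and $2R_{\iota}\dual\subset R_{\iota}$, and the same discriminant computation shows $\eta(R_{\iota})(h)$ is the reflection of $q(R_{\iota})$ in $[r'_R]$. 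As $\OG(\phi)$ carries a reflection to the reflection in the image class, this gives $\OG(\phi)(\eta(\SS)(s_r))=\eta(R_{\iota})(h)$, so $h$ is the desired isometry and Corollary~\ref{cor:lifts} produces $\tilde g$. Finally, since $R_{\iota}$ is negative definite, all positive directions of $L_{26}$ lie in $\SS\otimes\R$, where $\tilde g$ acts as $s_r\in\OGP{\SS}$; hence $\tilde g$ preserves $\PPP(L_{26})$ and so $\tilde g\in\OGP{L_{26}}$.

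The main obstacle is exactly the production of $h$: a priori $\eta(R_{\iota})$ need not be surjective, so one cannot simply lift an arbitrary element of $\OG(q(R_{\iota}))$ to $R_{\iota}$. What rescues the argument is that each wall of $D$ is cut out by an honest $(-2)$-vector $r'$ of $L_{26}$, and the reflection $s_{2r'_R}$ built from its $R_{\iota}$-component realizes precisely the discriminant action demanded by $\phi$. Thus the delicate step is verifying both $\phi([r_{\SS}])=[r'_R]$ and the matching of the two reflection formulas on $A(\SS)$ and $A(R_{\iota})$; the rest is formal.
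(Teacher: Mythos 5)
Your proof is correct, but it follows a genuinely different route from the one in the paper. The paper's actual proof of Proposition~\ref{prop:gtilde} is a verification: the isometries $\tilde{g}$ are exhibited explicitly in the computational data \cite{thecompdataL10L26}, having been found by the geometric heuristic described afterwards (locate the root system $\Sigma\subset\RRR(Q)$ of the negative-definite lattice $Q$ orthogonal to the wall, and search for $\tilde{g}$ sending $\Sigma$ to $-\Sigma$ while fixing $\gen{\Sigma}\sperp$). Your argument is instead the gluing strategy that the paper only flags as a possibility in the remark following Table~\ref{table:walls} --- namely, showing that $\OG(\phi)(\eta(\SS)(s_r))$ lies in the image of $\eta(R_{\iota})$ and invoking Corollary~\ref{cor:lifts} --- and you go beyond that remark by actually producing the required $h\in\OG(R_{\iota})$: since every wall of $D$ comes from some $r'\in\RRR(L_{26},D)$ with components $r_{\SS}$, $r'_R$ of square $-1$, the membership $r'\in L_{26}$ forces $\phi([r_{\SS}])=[r'_R]$, and the reflection $h=s_{2r'_R}$ in the $(-4)$-vector $2r'_R\in R_{\iota}$ (integral because $\intfR{x,2r'_R}\in 2\Z$ for $x\in R_{\iota}$) induces exactly the matching reflection on $q(R_{\iota})$; the sign ambiguity coming from $\phi\colon q(\SS)\isom -q(R_{\iota})$ is harmless since all discriminant groups here are $2$-elementary. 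Your closing observation that $\tilde{g}\in\OGP{L_{26}}$ because $R_{\iota}$ is negative definite is also correct. What your approach buys is a uniform, computer-free proof of the proposition (granting the earlier description of the walls of $D$ via $\RRR_D$); what the paper's approach buys is the explicit matrices for $\tilde{g}$, which are what one actually feeds into Borcherds' method in applications. The one point worth stating more carefully is the integrality of $s_{2r'_R}$ on $R_{\iota}$ itself (not just $2R_{\iota}\dual\subset R_{\iota}$, but $\intfR{x,r'_R}\in\Z$ for $x\in R_{\iota}$, which holds precisely because $r'_R\in R_{\iota}\dual$); otherwise the argument is complete.
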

Suppose that Proposition~\ref{prop:gtilde} is proved.
Since $\tilde{g}$ preserves $\RRR(L_{26})$,
the isometry $\tilde{g}|\SS=s_r$ of $\SS$ preserves 
the family $\iota\sp*\RRR(L_{26})\sperp$ of hyperplanes and hence preserves
the tessellation of $\PPP(L_{10})$ by induced chambers.
Since $D$ and $ D^{s_r}$ have the common wall $D\cap (r)\sperp$,
it follows that  $D^{s_r}$ is the induced chamber adjacent to $D$ across the wall $D\cap (r)\sperp$.
For any induced chamber  $D\sprime$,
there exists a chain of induced chambers
\[
D=D\spar{0}, \; D\spar{1}, \;\dots,\; D\spar{m}=D\sprime
\]
such that $D\spar{i-1}$ and $D\spar{i}$ are adjacent for $i=1, \dots, m$.
By induction on the length $m$ of the chain,
we can prove that there exists an isometry $\tilde{g}\sprime\in \OGP{L_{26}}$
preserving  $\iota(\SS)$
such that the induced isometry $\tilde{g}\sprime|\SS$ of $\SS$
maps $D$ to $D\sprime$.
Therefore the tessellation of $\PPP(L_{10})$ by induced chambers is simple.
\begin{table}
\[
\begin{array}{ccllllll}
\texttt{No.}&\texttt{name}&|V_R|&d_{\weyl}&n_{\weyl}&a_r&\texttt{$\ADE$-type of $\Sigma$}&\texttt{numb}\\ 
\hline 
1&\texttt{12A}&256&1&70&1&2A_{1}+D_{8}&10\\ 
 & & & & &8&D_{9}&2\\ 
2&\texttt{12B}&144&2&21&1&2A_{1}+A_{7}&12\\ 
3&\texttt{20A}&160&1&22&1&2A_{1}+D_{4}+D_{5}&10\\ 
 & & & & &4&2D_{5}&6\\ 
 & & & & &5&D_{4}+D_{6}&4\\ 
4&\texttt{20B}&128&1&14&1&2A_{1}+2D_{4}&12\\ 
 & & & & &4&D_{4}+D_{5}&8\\ 
5&\texttt{20C}&192&1&30&2&8A_{1}+A_{3}+D_{6}&15\\ 
 & & & & &6&10A_{1}+D_{7}&5\\ 
6&\texttt{20D}&96&2&15/2&1&2A_{1}+A_{3}+A_{4}&15\\ 
 & & & & &3&A_{4}+D_{4}&5\\ 
7&\texttt{20E}&112&1&10&1&7A_{1}+A_{5}&10\\ 
 & & & & &2&3A_{1}+A_{3}+A_{5}&10\\ 
8&\texttt{20F}&80&2&5&1&2A_{1}+2A_{3}&20\\ 
9&\texttt{40A}&96&1&6&1&6A_{1}+2A_{3}&12\\ 
 & & & & &2&2A_{1}+3A_{3}&12\\ 
 & & & & &3&4A_{1}+A_{3}+D_{4}&16\\ 
10&\texttt{40B}&160&1&16&2&6A_{1}+A_{3}+2D_{4}&16\\ 
 & & & & &4&8A_{1}+D_{4}+D_{5}&24\\ 
11&\texttt{40C}&80&1&4&1&8A_{1}+A_{3}&16\\ 
 & & & & &2&4A_{1}+2A_{3}&24\\ 
12&\texttt{40D}&128&1&10&2&10A_{1}+A_{3}+D_{4}&30\\ 
 & & & & &4&12A_{1}+D_{5}&10\\ 
13&\texttt{40E}&64&2&5/2&1&4A_{1}+2A_{2}&30\\ 
 & & & & &2&2A_{2}+A_{3}&10\\ 
14&\texttt{96A}&64&1&2&1&10A_{1}&32\\ 
 & & & & &2&6A_{1}+A_{3}&64\\ 
15&\texttt{96B}&96&1&4&2&14A_{1}+A_{3}&96\\ 
16&\texttt{96C}&48&2&1&1&6A_{1}&96\\ 
17&\texttt{infty}&32&1&0& & & \\ 
\end{array}
\]
%
\vskip .1cm
\caption{Walls of  $D$}\label{table:walls}
\end{table}
\begin{proof}[Proof of Proposition~\ref{prop:gtilde}]
The isometry $\tilde{g}$ with the hoped-for property is explicitly given
in~\cite{thecompdataL10L26} for each wall of $D$,
and thus the proof of Theorem~\ref{thm:simple} is completed.
\end{proof}
We explain the method by which  we found the isometry $\tilde{g} \in \OGP{L_{26}}$.
It is based on some optimistic heuristics.
The isometry $\tilde{g}$ does not 
necessarily have to satisfy
the conditions (i) and (ii) below.
Fortunately,  
this method worked for every wall $w=D\cap (r)\sperp$ of the induced chamber $D$.
We put
\[
Q:=\set{v\in L_{26}}{\intfLL{v, x}=0\;\;\textrm{for all}\;\; x\in \iota_{\PPP}(w)}.
\]
Since $\dim \,(r)\sperp=9$,
the even lattice $Q$ is negative-definite  of rank $26-9=17$
and contains $R_{\iota}$.
Let $\intfQ{\,,\,}$ denote the symmetric bilinear form of $Q$.
We calculate the set $\RRR(Q)$ of $(-2)$-vectors of $Q$.
The hyperplanes of $Q\tensor\R$
defined by $\intfQ{x, r\sprime}=0$, where $r\sprime \in \RRR(Q)$,
divide $Q\tensor\R$ into
a finite number of regions,
and they correspond bijectively
to the Conway chambers containing $\iota_{\PPP}(w)$.
We put
\begin{equation}\label{eq:SigmaE}
\Sigma:=\shortset{r\sprime\in \RRR(Q)}{\intfLL{\weyl, r\sprime}=1},
\end{equation}
where we regard $\RRR(Q)$ as a subset of $\RRR(L_{26})$ by the embedding  $Q\inj L_{26}$.
Let $P_w$ be an interior point of the wall $w=D\cap (r)\sperp$ in $(r)\sperp$.
Locally around $P_{w}$,
the Conway chamber $C(\weyl)$ is defined by
the inequalities $\intfLL{x, r\sprime}\ge 0$, where $r\sprime$ runs through $\Sigma$.
Let $C(\weyl\sprime)$ be the Conway chamber
defined locally around $P_{w}$
by the opposite inequalities $\intfLL{x, r\sprime}\le 0$,
where $r\sprime$ runs through $\Sigma$.
Then $C(\weyl\sprime)$ is one of the Conway chambers that induce
the induced chamber $D\sprime$ adjacent to $D$ across the wall $w$; $\iota_{\PPP}\inv (C(\weyl\sprime))=D\sprime$.
We search for isometries $\tilde{g}$ of $L_{26}$ such that
\begin{enumerate}[(i)]
\item  the isometry $\tilde{g}$ maps $\Sigma$ to $-\Sigma$, and
\item  the restriction $\tilde{g}|\gen{\Sigma}\sperp$ of $\tilde{g}$
to the orthogonal complement $\gen{\Sigma}\sperp$ in $L_{26}$
of the sublattice $\gen{\Sigma}$ generated by $\Sigma$ is the identity.
\end{enumerate}
If $\tilde{g}$ satisfies (i) and (ii), then
$\tilde{g}$ fixes $\iota_{\PPP}(P_{w}) \in \gen{\Sigma}\sperp\tensor\R $ 
and $\tilde{g}$ maps $C(\weyl)$ to $C(\weyl\sprime)$.
We then check the following conditions:
\begin{enumerate}[(i)]
\setcounter{enumi}{2}
\item  the isometry  $\tilde{g}$ preserves the image of $\iota$, and hence its restriction $\tilde{g}|\SS$ to $\SS$
maps $D=\iota_{\PPP}\inv (C(\weyl))$ to the adjacent chamber $D\sprime=\iota_{\PPP}\inv (C(\weyl\sprime))$,  and
\item  the restriction $\tilde{g}|\SS$ is equal to the reflection $s_r$.
\end{enumerate}
If we find an isometry $\tilde{g}$ satisfying (iii) and (iv),
we are done.
\par
\medskip
{\bf Explanation of Table~\ref{table:walls}.}
We  denote by $d_{\weyl}$ the minimal positive integer such that
$d_{\weyl}\weyl_{\SS}\in \SS$,
where $\weyl_{\SS}$ is the image of $\weyl$ by the orthogonal projection $L_{26}\to \SS\dual$.
We  put $n_{\weyl}:=\intfL{\weyl_{\SS}, \weyl_{\SS}}$.
For a $(-2)$-vector $r$ defining a wall of $D$, we put $a_r:=\intfL{\weyl_{\SS}, r}$.
The root system $\Sigma$ is defined by~\eqref{eq:SigmaE}.
The item \texttt{numb} is the number of walls with the described properties.
\begin{remark}
Let $\phi\colon q(\SS)\isom -q(R_{\iota})$
be the isomorphism induced by $L_{26}\subset \SS\dual\oplus R_{\iota}\dual$,
and  $\OG(\phi)\colon \OG(q(\SS))\isom \OG(q(R_{\iota}))$
the isomorphism induced by $\phi$.
Proposition~\ref{prop:gtilde} can also be proved by
showing that the image of $\eta(\SS)(s_r)\in \OG(q(\SS))$
by $\OG(\phi)$ belongs to the image of $\eta(R_{\iota})\colon \OG(R_{\iota})\to \OG(q(R_{\iota}))$.
\end{remark}
\begin{example}
In~\cite{Ohashi2009},
Ohashi classified all fixed-point free involutions
of the Kummer surface $X:=\Km(\Jac(C))$
associated with the Jacobian variety of a generic genus-$2$ curve $C$,
and showed that $X$ has
exactly $6+15+10$ fixed-point free involutions  modulo conjugation in $\Aut(X)$.
The automorphism group $\Aut(X)$
had been calculated by Kondo~\cite{Kondo1998} by Borcherds' method.
Let $\pi\colon X\to Y$ be the quotient morphism
by a fixed-point free involution of $X$.
We compose  the embedding $\iota_X\colon S_X\inj L_{26}$ used in~\cite{Kondo1998}
with the pull-back homomorphism $\pi^*\colon S_Y(2)\inj S_X$,
and obtain a primitive embedding $\iota_Y\colon S_Y (2) \inj L_{26}$.
We see that
$\iota_Y$ is  of type {\tt 20E}  for $6$ Enriques surfaces,
of type {\tt 40A}  for $15$ Enriques surfaces,
and
of type {\tt 40C}  for $10$ Enriques surfaces.
\par
See~\cite{ShimadaVeniani2019} for more examples,
and for applications to the calculation of $\Aut(Y)$.
\end{example}

\bibliographystyle{plain}

\end{document}